\newcommand{\map}[1]{\xrightarrow{#1}}
\newcommand{\iso}{\cong}
\newcommand{\Q}{\mathbb Q}
\newcommand{\Z}{\mathbb Z}
\newcommand{\C}{\mathbb C}
\newcommand{\alg}{\mathrm{alg}}
\newcommand{\A}{\mathbb A}
\newcommand{\co}{\mathcal O}
\begin{document}

\title{Central derivatives of L-functions in Hida families}

\author{Benjamin Howard}
\address{Department of Mathematics, Boston College, Chestnut Hill, MA, 02467}
\email{howardbe@bc.edu}

\begin{abstract}
We prove a result of the following type: given a Hida family of modular forms, if there exists a weight two form in the family whose $L$-function vanishes to exact order one at $s=1$,  then all but finitely many weight two forms in the family enjoy this same property.  The analogous result for order of vanishing zero is also true, and is an easy consequence of the existence of the Mazur-Kitagawa two-variable $p$-adic $L$-function.  
\end{abstract}

\thanks{This research was supported in part  by NSF grant DMS-0556174}
\thanks{2000 Mathematics Subject Classification: 11G18, 11G40, 11R23}

\maketitle

\newtheorem{Thm}{Theorem}
\newtheorem{Prop}[Thm]{Proposition}
\newtheorem{Lem}[Thm]{Lemma}
\newtheorem{Cor}[Thm]{Corollary}
\newtheorem{Def}[Thm]{Definition}

\renewcommand{\labelenumi}{(\alph{enumi})}


\section{Introduction}
\label{sec:0}


Fix embeddings $\Q^\alg\hookrightarrow \Q_p^\alg$ and $\Q^\alg\hookrightarrow \C$, and let $\omega:(\Z/p\Z)^\times\rightarrow \mu_{p-1}$ denote the Teichmuller lift.  Suppose that $M$ is a positive integer, $p\nmid M$ is  prime, and 
\begin{equation}\label{cusp form}
g=\sum_{n>0} a(n) q^n \in S_k(\Gamma_0(Mp),\omega^j,\Q^\alg)
\end{equation}
is a normalized cusp form of weight $k\ge 2$, level $\Gamma_0(Mp)$, and nebentype $\omega^j$.  We assume that $g$ is an eigenform  for all Hecke operators $T_\ell$ with $\ell\nmid Mp$ and $U_\ell$ with $\ell\mid Mp$, that $g$ is $M$-new, and that $g$ is $p$-ordinary in the sense that $|a(p)|_p=1$.   Fixing a finite extension $F/\Q_p$ inside of $\Q_p^\alg$ large enough to contain all Fourier coefficients of $g$, let $\rho_g:G_\Q\rightarrow \mathrm{GL}_2(F)$ be the Galois representation attached to $g$ by Deligne.  We assume throughout  that $p$ does not divide $6M\phi(M)$ (where $\phi$ is Euler's function) and that the residual Galois representation of $\rho_g$ is absolutely irreducible. 

To the form $g$ Hida attaches a $q$-expansion $\mathbf{g}\in R[[q]]$ where $R$ is an integral domain, finite and flat over the Iwasawa algebra $\Lambda=\co_F[[1+p\Z_p]]$ ($\co_F$ is the maximal order in $F$).  If $\mathfrak{p}$ is an arithmetic prime of $R$ as defined in \S \ref{sec:1} and $F_\mathfrak{p}=R_\mathfrak{p}/\mathfrak{p}$, then the image of $\mathbf{g}$ in $F_\mathfrak{p}[[q]]$, denoted $g_\mathfrak{p}$,  is the $q$-expansion of a $p$-adic modular form of some weight, level, and nebentype all of which depend on $\mathfrak{p}$.  For a suitable choice of $\mathfrak{p}$ we recover $g=g_\mathfrak{p}$.   Let us denote by $\mathfrak{P}_2$ the set of all weight two arithmetic primes of $R$; that is, the (infinite) set of arithmetic primes $\mathfrak{p}$ for which the $p$-adic modular form $g_\mathfrak{p}$ has weight two.   The family $\{g_\mathfrak{p}\mid \mathfrak{p}\in\mathfrak{P}_2\}$ contains  only finitely many forms of any given nebentype, but there is a uniform way of twisting the entire family  into a family of $p$-adic modular forms of trivial nebentype.  Indeed, a choice of critical character $\theta:\Z_p^\times\rightarrow R^\times$ in the sense of \S \ref{sec:1} provides us with, for each $\mathfrak{p}\in\mathfrak{P}_2$,  a character $\vartheta_\mathfrak{p}: \Z_p^\times\rightarrow F_\mathfrak{p}^\times$ whose square is the nebentype of $g_\mathfrak{p}$.  Thus the form $f_\mathfrak{p}=g_\mathfrak{p}\otimes\vartheta_\mathfrak{p}^{-1}$ has trivial nebentype, and one may ask how the order of vanishing (i.e. the \emph{analytic rank}) $\mathrm{ord}_{s=1} L(s,f_\mathfrak{p})$ varies as $\mathfrak{p}$ varies (strictly speaking one must choose an embedding $F_\mathfrak{p}\hookrightarrow \Q_p^\alg$ in order to view the $q$-expansion coefficients of  $f_\mathfrak{p}$ as elements of $\Q^\alg\subset\C$, but we ignore this for the moment).  A conjecture of Greenberg \cite{greenberg_deformation} predicts that as $\mathfrak{p}$ varies over $\mathfrak{P}_2$ all but finitely many of the modular forms $f_\mathfrak{p}$ should have analytic rank zero or one, depending on the root number of the $L$-function of $f_\mathfrak{p}$.  This root number is constant as $\mathfrak{p}$ varies, except for a finite and explicit set of bad $\mathfrak{p}$'s which were determined by Mazur-Tate-Teitelbaum \cite{mtt}.  When the root number is $+1$  it follows easily from the existence of the Mazur-Kitagawa two-variable $p$-adic $L$-function that if one can find a single $\mathfrak{p}\in\mathfrak{P}_2$ for which $f_\mathfrak{p}$ has  analytic rank zero, then  $f_\mathfrak{p}$ has analytic rank zero for all but finitely many $\mathfrak{p}\in\mathfrak{P}_2$; see Theorem \ref{rank zero}, which also includes information about $f_\mathfrak{p}$ of higher even weight.  In the present article we prove a similar result in the case where the root number is $-1$.  If  $ \mathrm{Gen}_2(\theta)\subset\mathfrak{P}_2$  denotes the set of arithmetic primes of weight two which are generic for $\theta$ in the sense of Definition \ref{Def:generic}, then we prove that if there is a single $\mathfrak{p}\in \mathrm{Gen}_2(\theta)$ for which $f_\mathfrak{p}$ has analytic rank one then the analytic rank is one for all but finitely many $\mathfrak{p}\in \mathrm{Gen}_2(\theta)$.  The set $ \mathrm{Gen}_2(\theta)$ is very explicit and contains all but finitely many $\mathfrak{p}\in\mathfrak{P}_2$.  The result is stated in the text as Theorem \ref{central derivatives}.  This provides a method for verifying Greenberg's conjecture in any given case, as one need only  exhibit a single form in the Hida family having the correct analytic rank.

The method of proof is based on the author's earlier papers \cite{howard-hida} and \cite{howard-GZ}.  The first of these papers contains the construction of a \emph{big Heegner point} $\mathfrak{Z}\in H^1(K,\mathbf{T}^\dagger)$ where $K$ is a quadratic imaginary field and $\mathbf{T}^\dagger$ is a twist of the $R$-adic Galois representation attached to $\mathbf{g}$.  The second paper contains an extension of the Gross-Zagier theorem to modular forms of nontrivial nebentype which is general enough to include the forms $\{g_\mathfrak{p} \mid \mathfrak{p}\in\mathrm{Gen}_2(\theta)\}$.  For any $\mathfrak{p}\in \mathrm{Gen}_2(\theta)$ we make explicit the connection between the images 
$
\mathfrak{Z}_\mathfrak{p}\in H^1(K,\mathbf{T}^\dagger\otimes_R F_\mathfrak{p})
$ 
of $\mathfrak{Z}$ for various $\mathfrak{p}$ and the generalized Heegner points constructed in \cite{howard-GZ}.   Assuming that there is a $\mathfrak{p}\in \mathrm{Gen}_2(\theta)$ for which  $f_\mathfrak{p}$ has analytic rank one, we use the main result of \cite{howard-GZ} to prove the nontriviality of $\mathfrak{Z}_\mathfrak{p}$ and then deduce that $\mathfrak{Z}$ is not $R$-torsion (as predicted by \cite[Conjecture 3.4.1]{howard-hida}).  From this it follows that $\mathfrak{Z}_\mathfrak{p}$ is non-trivial for all but finitely many $\mathfrak{p}\in \mathrm{Gen}_2(\theta)$, and again applying the main result of \cite{howard-GZ} we deduce that the  generic analytic rank of the $f_\mathfrak{p}$'s is one.

The sole reason for restricting attention to those forms $f_\mathfrak{p}$ having weight two is that the extension of the Gross-Zagier theorem proved in \cite{howard-GZ} applies only to modular forms of weight two.  While higher even weight analogues of the Gross-Zagier theorem are known \cite{nek:gz,zhang97}, they are not strong enough to deduce higher weight analogues of Proposition \ref{Prop:gz I} below.

If $E$ is a number field we denote by $\A_E$ its ring of adeles and by $$\mathrm{art}_E:\A_E^\times\rightarrow \mathrm{Gal}(E^\mathrm{ab}/E)$$ the Artin reciprocity map $\mathrm{art}_E(x)=[x,E]$ normalized as in \cite[\S5.2]{shimura}.


\section{The Hida family}
\label{sec:1}


In this section we quickly recall the basic facts of Hida theory which we will need, following the notation of \cite[\S 2.1]{howard-hida} (in which the reader will find references to  facts stated here without proof).  
Set $\Gamma=1+p\Z_p$, define 
$$
\Lambda=\co_F[[\Gamma]]\hspace{1cm} \underline{\Lambda}=\co_F[[\Z_p^\times]],
$$
and denote by $z\mapsto [z]$ the canonical inclusion of group-like elements $\Z_p^\times\hookrightarrow\underline{\Lambda}^\times$.  For every integer $t>0$ set 
$$\Phi_t=\Gamma_0(M)\cap\Gamma_1(p^t)\subset\mathrm{SL}_2(\Z)$$ and let $\mathfrak{h}_{r,t}$ denote the $\co_F$-algebra generated by  the Hecke operators 
$$
\{ T_\ell : (\ell,Mp)=1 \} \cup \{ U_\ell : \ell\mid Mp\} \cup \{\langle d\rangle : (d,p)=1\}
$$ 
acting on the space of $p$-adic cusp forms $S_r(\Phi_t,\Q_p^\alg)$ of level $\Phi_t$ and weight $r\ge 2$.  We view each  $\mathfrak{h}_{r,t}$ as a $\underline{\Lambda}$-algebra via  $[z]\mapsto z^{r-2}\langle z\rangle$.  Set $\mathfrak{h}_{r,t}^\mathrm{ord}=e^\mathrm{ord}\mathfrak{h}_{r,t}$ where $e^\mathrm{ord}\in \mathfrak{h}_{r,t}$ is Hida's ordinary projector and define 
$$
\mathfrak{h}^\mathrm{ord}=\varprojlim_t \mathfrak{h}^\mathrm{ord}_{r,t}.
$$  
By Hida's theory the $\underline{\Lambda}$-algebra $\mathfrak{h}^\mathrm{ord}$ is independent of $r$ and is finite and flat over $\Lambda$.

The cuspform (\ref{cusp form}) determines a homomorphism (again denoted $g$) $$\mathfrak{h}^\mathrm{ord}\rightarrow \mathfrak{h}^\mathrm{ord}_{k,1}\rightarrow \co_F.$$  The ring $\mathfrak{h}^\mathrm{ord}$ is semi-local and there is a unique maximal ideal $\mathfrak{m}\subset\mathfrak{h}^\mathrm{ord}$ such that $g$ factors through the local summand $\mathfrak{h}^\mathrm{ord}_\mathfrak{m}$.  The local ring $\mathfrak{h}^\mathrm{ord}_\mathfrak{m}$ then has a unique minimal prime ideal $\mathfrak{a}$ such that $g$ factors through the integral domain 
$
R= \mathfrak{h}^\mathrm{ord}_\mathfrak{m}/\mathfrak{a}.
$
We say that $\mathfrak{h}_\mathfrak{m}^\mathrm{ord}$ is the \emph{Hida family} of $g$, while $R$ is the \emph{branch} of  $\mathfrak{h}_\mathfrak{m}^\mathrm{ord}$ on which $g$ lives.  A prime ideal  $\mathfrak{p}\subset R$ is  \emph{arithmetic} if there is a homomorphism of $\co_F$-algebras $R\rightarrow \Q_p^\alg$ with kernel $\mathfrak{p}$ such that the composition $\Gamma \rightarrow R^\times\rightarrow \Q_p^{\alg,\times}$ has the form $\gamma\mapsto \psi(\gamma)\gamma^{r-2}$ with $\psi$ of finite order and $r\ge 2$ an integer (called the \emph{weight} of $\mathfrak{p}$).  For such a $\mathfrak{p}$ we define $F_\mathfrak{p}=R_\mathfrak{p}/\mathfrak{p}$, a finite extension of $F$.  The homomorphism $\Gamma\rightarrow R^\times\rightarrow F^\times_\mathfrak{p}$ then has the form $\gamma\mapsto \psi_\mathfrak{p}(\gamma) \gamma^{r-2}$ where $\psi_\mathfrak{p}$ is  the \emph{wild character} of $\mathfrak{p}$.        By Hida's theory there is a  formal $q$-expansion
$$
\mathbf{g}=\sum_{n>0} \mathbf{a}(n) q^n\in R[[q]]
$$
with the property that for any arithmetic prime $\mathfrak{p}$ of weight $r$ the image of $\mathbf{g}$ in $F_\mathfrak{p}[[q]]$ is an $M$-new ordinary  $p$-adic eigenform
$$
g_{\mathfrak{p}} = \sum_{n>0} a_\mathfrak{p}(n)q^n \in  S_r(\Gamma_0(Mp^t),  \omega^{k+j-r}\psi_\mathfrak{p}, F_\mathfrak{p} ).
$$
Here $t=t(\mathfrak{p})$ is the smallest positive integer for which $1+p^t\Z_p  \subset  \mathrm{ker}(\psi_\mathfrak{p})$ and we view both $\omega$ and $\psi_\mathfrak{p}$ as characters of 
$$(\Z/p^t\Z)^\times\iso (\Z/p\Z)^\times   \times  (1+p\Z_p)/(1+p^t\Z_p).$$

 A \emph{critical character} is a homomorphism 
 $$ 
 \theta:\Z_p^\times\rightarrow R^\times
 $$
 which satisfies $\theta^2(z)=[z ]$ for all $z\in\Z_p^\times$.  There are exactly two critical characters, and they differ by multiplication by $\omega^{(p-1)/2}$; see Proposition 2.1.3 and Remark 2.1.4 of \cite{howard-hida}.  We now fix, for the remainder of this article, a critical character $\theta$ and define 
$$
\Theta: G_\Q \rightarrow R^\times
$$
by $\Theta(\sigma) =\theta(\epsilon_\mathrm{cyc}(\sigma))$, where $\epsilon_\mathrm{cyc}:G_\Q\rightarrow \Z_p^\times$ is the cyclotomic character.   The homomorphism $[\cdot]:\Z_p^\times\rightarrow R^\times$ has the property that for every arithmetic prime $\mathfrak{p}$ of weight $r$  the composition of $[\cdot]$ with $R^\times\rightarrow F^\times_\mathfrak{p}$ is given by
$$
[z]_\mathfrak{p}=\omega^{k+j-r}(z) \psi_\mathfrak{p}(z) z^{r-2}
$$
for every $z\in\Z_p^\times$.  We denote by $\theta_\mathfrak{p}$ and $\Theta_\mathfrak{p}$ the compositions
$$
\theta_\mathfrak{p}:\Z_p^\times\map{\theta} R^\times\rightarrow  F_\mathfrak{p}^\times
\hspace{1cm}
\Theta_\mathfrak{p}:G_\Q\map{\Theta} R^\times\rightarrow  F_\mathfrak{p}^\times.
$$
and note that when $\mathfrak{p}$ has weight two the nebentype of $g_\mathfrak{p}$ is $\theta_\mathfrak{p}^2=[\cdot]_\mathfrak{p}$,

For each $t>0$ let $X_t=X(\Phi_t)$ be the modular curve over $\mathrm{Spec}(\Q)$ classifying elliptic curves with $\Phi_t$ level structure, set $J_t=\mathrm{Jac}(X_t)$, and define an $\mathfrak{h}^\mathrm{ord}_{2,t}[[G_\Q]]$-module 
$$
\mathrm{Ta}_p^\mathrm{ord}(J_t) = e^\mathrm{ord}(\mathrm{Ta}_p(J_t)\otimes_{\Z_p}\co_F).
$$
Passing to the limit over $t$ we define an $\mathfrak{h}^\mathrm{ord}$-module $\mathbf{Ta}^\mathrm{ord}=\varprojlim \mathrm{Ta}_p^\mathrm{ord}(J_t)$.   According to \cite[Th\'eor\`eme 7]{MT} our hypothesis on the  irreducibility of the residual Galois representation attached to $g$ implies that 
$$
\mathbf{Ta}^\mathrm{ord}_\mathfrak{m} =\mathbf{Ta}^\mathrm{ord}\otimes_{\mathfrak{h}^\mathrm{ord}}\mathfrak{h}^\mathrm{ord}_\mathfrak{m}
\hspace{1cm}
\mathbf{T}=\mathbf{T}^\mathrm{ord}\otimes_{\mathfrak{h}^\mathrm{ord}} R
$$
 are free of rank two over $\mathfrak{h}^\mathrm{ord}_\mathfrak{m}$ and $R$, respectively.  
The $R$-module $\mathbf{T}$ carries a natural $G_\Q$-action, and the twist $\mathbf{T}^\dagger=\mathbf{T}\otimes \Theta^{-1}$ admits a perfect alternating pairing 
$
\mathbf{T}^\dagger\times\mathbf{T}^\dagger\rightarrow R(1)
$
where $R(1)$ is the Tate twist of $R$.   For $E$ any number field denote by 
$$
\mathrm{Sel}_\mathrm{Gr}(E,\mathbf{T}^\dagger)\subset H^1(E,\mathbf{T}^\dagger)
$$
 the \emph{strict Greenberg Selmer group} defined in \cite[Definition 2.4.2]{howard-hida}.  For any arithmetic prime $\mathfrak{p}$ define $V_\mathfrak{p}=\mathbf{T}\otimes_R F_\mathfrak{p}$ and $V^\dagger_\mathfrak{p}=\mathbf{T}^\dagger \otimes_R F_\mathfrak{p}$.

 \begin{Def}
Suppose $f\in S_2(\Gamma_0(M), \Q_p^\alg)$ is a $p$-ordinary newform with $q$-expansion $f=\sum_{n>0} b(n)q^n$ and let $\alpha$ denote the unit root of the polymomial $X^2-b(p)X+p$. The \emph{$p$-stabilization} of $f$ is the $p$-adic modular form of level $Mp$ with $q$-expansion
$$
\sum_{n>0} b(n)q^n+\frac{p}{\alpha} \sum_{n>0}b(n)q^{pn}.
$$
We note that the $p$-stabilization of $f$ is an eigenform for $U_p$ with eigenvalue $\alpha$.
 \end{Def}

\begin{Def}\label{Def:generic}
An arithmetic prime $\mathfrak{p}\subset R$ of weight two is said to be \emph{generic} for  $\theta$ if one of the following (mutually exclusive) hypotheses is satisfied:
\begin{enumerate}
 \item
 $g_\mathfrak{p}$ is the $p$-stabilization of a newform $f\in S_2(\Gamma_0(M),F_\mathfrak{p})$ and $\theta_\mathfrak{p}$ is trivial;
\item
$g_\mathfrak{p}$  is new of level $Mp$ and  $\theta_{\mathfrak{p}}=\omega^{\frac{p-1}{2}}$;
\item
$g_\mathfrak{p}$ has nontrivial nebentype.
 \end{enumerate}
\end{Def}

 Let $\mathrm{Gen}_2(\theta)$ denote the set of all arithmetic weight two primes of $R$ which are generic for $\theta$.  Clearly all but finitely many weight two arithmetic primes are generic for $\theta$ as all but finitely many satisfy condition  (c).


\section{Results over quadratic imaginary fields}
\label{sec:2}


Let $K$ be a quadratic imaginary field of discriminant $-D$ prime to $Mp$ and let $\epsilon_K=\left(\frac{-D}{\cdot}\right)$ denote the quadratic Dirichlet character attached to $K$.  Assume that every prime divisor of $M$ splits in $K$, so that $\epsilon_K(M)=1$, and fix an ideal $\mathfrak{M}\subset \co_K$ of the maximal order of $K$ such that $\co_K/\mathfrak{M}\iso \Z/M\Z$.  Let $H$ denote the Hilbert class field of $K$.  Taking $c=1$ in the construction of \cite[\S 2.2]{howard-hida} gives a canonical cohomology class, the \emph{big Heegner point} of conductor $1$
$$
\mathfrak{X}=\mathfrak{X}_1\in H^1(H,\mathbf{T}^\dagger).
$$
We define
$$
\mathfrak{Z}= \mathrm{Cores}_{H/K}( \mathfrak{X}) \in  H^1(K, \mathbf{T}^\dagger)
$$
and for any arithmetic prime $\mathfrak{p}\subset R$ let $\mathfrak{Z}_\mathfrak{p}$ denote the image of $\mathfrak{Z}$ in $H^1(K,V_\mathfrak{p}^\dagger)$.

For  $\mathfrak{p}\subset R$ arithmetic of weight two, define an $F_\mathfrak{p}^{\times}$-valued character of $\A_K^\times$
$$
\chi_\mathfrak{p} (x)= \Theta_\mathfrak{p}( \mathrm{art}_\Q(\mathrm{N}_{K/\Q}(x)) )
$$
and denote by $\chi_{0,\mathfrak{p}}$ the restriction of $\chi_\mathfrak{p}$ to $\A^\times_\Q$. As before let $t=t(\mathfrak{p})$ be the smallest positive integer for which $\psi_\mathfrak{p}$ is trivial on $1+p^t\Z_p$. The conductor of $\chi_{0,\mathfrak{p}}$ divides $p^t$ (more precisely,  $\chi_{0,\mathfrak{p}}$ is either trivial or of conductor $p^t$), and we may view $\chi_{0,\mathfrak{p}}$ as a character of $(\Z/p^t\Z)^\times$ in the usual way.  The nebentype of $g_\mathfrak{p}$ is  then equal to $\chi_{0,\mathfrak{p}}^{-1}$.  Indeed, it follows from \S \ref{sec:1} that $g_\mathfrak{p}$ has nebentype 
 $[\cdot]_\mathfrak{p}=\theta^2_\mathfrak{p}$, and the equality
 $\theta^2_\mathfrak{p}=\chi_{0,\mathfrak{p}}^{-1}$ follows from the relations
$$
\chi_{0,\mathfrak{p}}(x)=\chi_{\mathfrak{p}}(x_p)=\Theta^2_\mathfrak{p}(\mathrm{art}_\Q(x_p))=\theta_\mathfrak{p}^{-2}(x)
$$
where $x\in\Z_p^\times$, $x_p\in\A^\times_\Q$ is the idele with $x$ in the $p$-component and $1$ in all other components, and we have used the fact that the composition 
$$
\Z_p^\times\map{x\mapsto x_p} \A_\Q^\times\map{\mathrm{art}_\Q} \mathrm{Gal}(\Q(\mu_{p^\infty})/\Q)\map{\epsilon_\mathrm{cyc}} \Z_p^\times
$$
is given by $x\mapsto x^{-1}$.

Although we will not recall here the construction of the class $\mathfrak{X}$, we will need an explicit description of  $\mathfrak{Z}_\mathfrak{p}$ when $\mathfrak{p}$ has weight two.  Fix such a $\mathfrak{p}$ and let $t$  be as above.  Let $J_t$ denote the Jacobian of the modular curve $X_t=X(\Phi_t)$.  Let $H_{p^t}$ denote the ring class field of the order $\co_{p^t}\subset \co_K$ of conductor $p^t$ and set $L_t=H_{p^t}(\mu_{p^t})$.   As in \cite[\S 2.2]{howard-hida} (but supressing $c=1$ from the notation) denote by $h_{t}\in X_t(L_{t})$ the point corresponding to the triple $(E_{t},\mathfrak{m}_t,\pi_{t})$ where $E_{t}(\C)=\C/\co_{p^t}$,  $\mathfrak{m}_t\subset E_t(\C)$ is the kernel of the cyclic $M$-isogeny
$$
\C/\co_{p^t}\rightarrow \C/(\co_{p^t}\cap \mathfrak{M})^{-1},
$$
and $\pi_t\in E_t(\C)$ is any generator of the kernel of the cyclic $p^t$-isogeny
$$
\C/\co_{p^t}\rightarrow \C/\co_K.
$$
Fix a rational cusp $\mathfrak{c}\in X_t(\Q)$ and define
$$
Q=\sum_{\sigma\in\mathrm{Gal}(L_{t}/K)} \chi_\mathfrak{p}^{-1}(\sigma) (h_{t}-\mathfrak{c})^\sigma   \in J_t(L_t)\otimes_\Z F_\mathfrak{p}.
$$
 By the Manin-Drinfeld theorem \cite[\S 2.1.5]{cremona} the point $Q$  does not depend on the choice of $\mathfrak{c}$.  The natural map $\mathbf{T}\rightarrow V_\mathfrak{p}$ can be factored as
$$
\mathbf{T}\rightarrow \mathrm{Ta}_p^\mathrm{ord}(J_t)\rightarrow V_\mathfrak{p}
$$
yielding a map
$$
J_t(L_t)\rightarrow  H^1(L_t,\mathrm{Ta}_p(J_t))  \map{e^\mathrm{ord}} H^1(L_t, \mathrm{Ta}_p^\mathrm{ord}(J_t)) 
\rightarrow H^1(L_t,V_\mathfrak{p}).
$$
If we define $Q_\mathfrak{p}$ to be the image of $Q$ under the induced map
\begin{equation}\label{kummer}
J_t(L_t)\otimes_\Z F_\mathfrak{p}\rightarrow  H^1(L_t,V_\mathfrak{p})
\end{equation}
then, tracing through the construction of $\mathfrak{X}$ found in \cite[\S 2.2]{howard-hida}, $Q_\mathfrak{p}$ is equal (up to multiplication by an element of $F_\mathfrak{p}^\times$) to the image of $\mathfrak{Z}_\mathfrak{p}$ under 
$$
H^1(K,V_\mathfrak{p}^\dagger)\map{\mathrm{res}}H^1(L_t,V_\mathfrak{p}^\dagger)
$$
after identifying the $G_{L_t}$-representations $V^\dagger_\mathfrak{p}\iso V_\mathfrak{p}\otimes\Theta_\mathfrak{p}^{-1}\iso V_\mathfrak{p}$.  As the kernel of the above restriction map  is both an $F_\mathfrak{p}$-vector space and is killed by $[L_t:K]$, it must be trivial, and we find that
\begin{equation}\label{simple class}
Q_\mathfrak{p}=0\iff \mathfrak{Z}_\mathfrak{p}=0.
\end{equation}

Let $\mathrm{Emb}(\mathfrak{p})$ denote the set of all $F$-algebra embeddings $F_\mathfrak{p}\hookrightarrow \Q_p^\alg$.  If for each $\iota\in \mathrm{Emb}(\mathfrak{p})$ we set $a^\iota_\mathfrak{p}(n)=\iota(a_\mathfrak{p}(n))$ then $\{a^\iota_\mathfrak{p}(n)\}$ generates  a finite extension of $\Q$ inside $\Q_p^\alg$, and so we may view 
$$
g^\iota_\mathfrak{p}=\sum_{n>0} a^\iota_\mathfrak{p}(n) q^n
$$ 
as a classical modular form with Fourier coefficients in $\Q^\alg$.  Denote by $\chi_\mathfrak{p}^\iota$ the unique $\Q^{\alg,\times}$-valued character of $\A_K^\times$ satisfying $\iota(\chi_\mathfrak{p}(x))=\chi^\iota_\mathfrak{p}(x)$ for all $x\in\A_K^\times$, and define $Q^\iota\in J_t(L_t)\otimes_\Z \Q^\alg$ in the same way as above, replacing $\chi_\mathfrak{p}$ by $\chi_\mathfrak{p}^\iota$.  Using the chosen embedding $\Q^\alg\hookrightarrow \C$, we may form the Rankin-Selberg convolution $L$-function
$L(s,\chi_\mathfrak{p}^\iota,g_\mathfrak{p}^\iota)$ as in \cite[\S 1]{howard-GZ}.  One could also accept (\ref{factorization}) below as the definition of $L(s,\chi_\mathfrak{p}^\iota,g_\mathfrak{p}^\iota)$.

\begin{Prop}\label{Prop:gz I}
Suppose $\mathfrak{p}\in\mathrm{Gen}_2(\theta)$ satisfies either (b) or (c) in Definition \ref{Def:generic}.  For any $\iota\in\mathrm{Emb}(\mathfrak{p})$ the $L$-function $L(s,\chi_\mathfrak{p}^\iota,g_\mathfrak{p}^\iota)$  vanishes to odd order at $s=1$ and
  $$
\mathfrak{Z}_\mathfrak{p}=0 \iff  L'(1,\chi_\mathfrak{p}^\iota,g_\mathfrak{p}^\iota)=0  .
 $$ 
\end{Prop}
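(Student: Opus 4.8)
The plan is to deduce both assertions from the generalized Gross--Zagier theorem of \cite{howard-GZ}, whose central-derivative formula is built precisely for the forms $g_\mathfrak{p}^\iota$ and characters $\chi_\mathfrak{p}^\iota$ occurring here. The two claims are treated separately but by the same circle of ideas: the statement about odd order of vanishing reflects the sign in the functional equation, while the equivalence $\mathfrak{Z}_\mathfrak{p}=0\iff L'(1,\chi_\mathfrak{p}^\iota,g_\mathfrak{p}^\iota)=0$ reflects nondegeneracy of the N\'eron--Tate height.

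For the order of vanishing, I would first use the factorization (\ref{factorization}) of the convolution $L$-function to reduce the sign of its functional equation to a product of local root numbers. The running Heegner hypothesis---that every prime dividing $M$ splits in $K$ and that $-D$ is prime to $Mp$---forces the local signs away from $p$, together with the archimedean sign, to combine into an overall contribution of $-1$. The role of the genericity conditions (b) and (c) is to pin down the local behavior at $p$ so that these are exactly the cases in which the total root number is $-1$; these are also precisely the hypotheses under which the main theorem of \cite{howard-GZ} is stated. It follows that $L(s,\chi_\mathfrak{p}^\iota,g_\mathfrak{p}^\iota)$ vanishes to odd order at $s=1$, so that the first derivative there is the natural object of study.

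For the equivalence I would apply the main result of \cite{howard-GZ} in the form
$$
L'(1,\chi_\mathfrak{p}^\iota,g_\mathfrak{p}^\iota)= c\cdot \hat h(P^\iota),
$$
where $c\in\Q^{\alg,\times}$, $\hat h$ is the N\'eron--Tate height on $J_t(L_t)\otimes_\Z\Q^\alg$, and $P^\iota$ is the $g_\mathfrak{p}^\iota$-isotypic projection of the Heegner-type point $Q^\iota$ defined above. Since the height pairing is nondegenerate on the $g_\mathfrak{p}^\iota$-isotypic part, $\hat h(P^\iota)=0$ if and only if $P^\iota$ is torsion, which holds if and only if its image $Q_\mathfrak{p}$ under the Kummer map (\ref{kummer}) vanishes. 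Combining this with (\ref{simple class}), which identifies $Q_\mathfrak{p}=0$ with $\mathfrak{Z}_\mathfrak{p}=0$, yields the desired equivalence. Note that the identity above holds whatever the (odd) order of vanishing: if the order is exactly one then $L'(1,\chi_\mathfrak{p}^\iota,g_\mathfrak{p}^\iota)\ne 0$ and $P^\iota$ is nontorsion, while if it is higher the height vanishes and $P^\iota$ is torsion.

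The main obstacle is bookkeeping rather than any single deep input: one must check that the point $Q^\iota$ constructed here is literally the Heegner point whose height is computed in \cite{howard-GZ} (after matching the level structure $\Phi_t$, the choice of $\mathfrak{M}$, and the base field $L_t$), and that the isotypic projection implicit in that height formula coincides with the projection to $V_\mathfrak{p}$ through $\mathrm{Ta}_p^\mathrm{ord}(J_t)$ that defines $Q_\mathfrak{p}$. Verifying that the hypotheses of \cite{howard-GZ} are met under (b) and (c)---in particular the local conditions at $p$ governing the nebentype and, in case (a) by contrast, the appearance of a $p$-stabilization---is the step requiring the most care, since it is exactly here that the restriction to primes satisfying (b) or (c) is used.
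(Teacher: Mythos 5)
Your proposal follows essentially the same route as the paper: identify the vanishing of $\mathfrak{Z}_\mathfrak{p}$ with the vanishing of the $g_\mathfrak{p}^\iota$-isotypic projection of the point $Q^\iota$, and then quote the twisted Gross--Zagier theorem \cite[Theorem A]{howard-GZ} (which is indeed a height formula plus nondegeneracy of the N\'eron--Tate pairing) together with the functional equation \cite[(1)]{howard-GZ}, applied with $f=g_\mathfrak{p}^\iota$, $\chi=\chi_\mathfrak{p}^\iota$, $N=Mp^t$, $C=S=p^t$. However, the step you set aside as ``bookkeeping rather than any single deep input'' --- namely that $e_\mathfrak{p}^\iota Q^\iota$ torsion is \emph{equivalent} to $Q_\mathfrak{p}=0$ --- is the only place in the proof where an actual argument (rather than a citation) is needed, and it is precisely where conditions (b) and (c) of Definition \ref{Def:generic} do their work. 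The point is that under (b) or (c) the form $g_\mathfrak{p}$ is new of level $Mp^t$, so by Eichler--Shimura the isotypic summand $e_\mathfrak{p}(\mathrm{Ta}_p(J_t)\otimes_{\Z_p}F_\mathfrak{p})$ has multiplicity one and maps isomorphically onto $V_\mathfrak{p}$; hence the map $e_\mathfrak{p}(J_t(L_t)\otimes_\Z F_\mathfrak{p})\rightarrow H^1(L_t,V_\mathfrak{p})$ induced by (\ref{kummer}) is injective, and combining with (\ref{simple class}) gives $e_\mathfrak{p}^\iota Q^\iota=0\iff\mathfrak{Z}_\mathfrak{p}=0$. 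In case (a) this fails: the isotypic component of the Tate module is two copies of $V_\mathfrak{p}$ coming from the two degeneracy maps, $e^{\mathrm{ord}}$ projects onto only one of them, and the resulting loss of injectivity is exactly why Proposition \ref{Prop:gz II} retains only one implication. One further small correction: the paper obtains the odd order of vanishing directly from the sign $-\epsilon_K(M)=-1$ in \cite[(1)]{howard-GZ} rather than from the factorization (\ref{factorization}), and conditions (b), (c) are \emph{not} ``exactly the cases in which the root number is $-1$'' --- in the non-generic weight-two cases the sign is $-\epsilon_K(p)$, which can also equal $-1$; the real role of genericity is the newness/multiplicity-one statement above.
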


\begin{proof}
Let $\mathbb{T}$ denote the $\Z$-algebra generated by the Hecke operators $T_n$ with $(n, Mp)=1$ and the diamond operators $\langle d\rangle$ with $(d,Mp)=1$ acting on the space of cusp forms $S_2(\Phi_t,\C)$.  Let $e_\mathfrak{p}\in \mathbb{T}\otimes_\Z F_\mathfrak{p}$ be the idempotent which projects onto the maximal summand on which $T_n=a_\mathfrak{p}(n)$ for all $(n,Mp)=1$ and $\langle d\rangle^{-1}=\chi_{0,\mathfrak{p}}(d)$ for all $(d,Mp)=1$.   Similarly, define $e_\mathfrak{p}^\iota\in \mathbb{T}\otimes_\Z\Q^\alg$ to be the projector to the maximal summand on which $T_n=a^\iota_\mathfrak{p}(n) $  and $\langle d\rangle^{-1}=\chi_{0,\mathfrak{p}}^\iota(d)$ for all $n$ and $d$ as above.    The algebra $\mathbb{T}$ acts on $J_t(L_t)$ and we have
\begin{equation}\label{iota switch}
e_\mathfrak{p}Q=0\iff e_\mathfrak{p}^\iota Q^\iota=0.
\end{equation}

Our hypotheses imply that  $g_\mathfrak{p}$ is new of level $Mp^t$ (where $t=t(\mathfrak{p})$ as above), so by Eichler-Shimura theory the summand
\begin{equation*}
e_\mathfrak{p} (\mathrm{Ta}^\mathrm{ord}_p(J_t)\otimes_{\co_F}F_\mathfrak{p})
=
e_\mathfrak{p} (\mathrm{Ta}_p(J_t)\otimes_{\Z_p}F_\mathfrak{p})
\end{equation*}   
is taken isomorphically to $V_\mathfrak{p}$ under  $\mathrm{Ta}_p^\mathrm{ord}(J_t)\otimes_{\co_F}F_\mathfrak{p}\rightarrow V_\mathfrak{p}$.  It follows that the map
$$
e_\mathfrak{p}( J_t(L_t)\otimes_\Z F_\mathfrak{p} )\rightarrow  H^1(L_t, V_\mathfrak{p})
$$
induced by (\ref{kummer}) is injective.  As this map takes $e_\mathfrak{p} Q$ to $Q_\mathfrak{p}$,  (\ref{simple class}) and (\ref{iota switch}) imply that
$$
 e_\mathfrak{p}^\iota Q^\iota=0\iff \mathfrak{Z}_{\mathfrak{p}}=0.
$$
We may now apply the results of  \cite[\S 1]{howard-GZ} with $f=g^\iota_\mathfrak{p}$, $\chi=\chi_\mathfrak{p}^\iota$, $N=Mp^t$, and $C=S=p^t$.   As we have assumed that $\epsilon_K(M)=1$ the functional equation  \cite[(1)]{howard-GZ}  forces  $L(s,\chi_\mathfrak{p}^\iota,g_\mathfrak{p}^\iota)$ to vanish to odd order at $s=1$, and  \cite[Theorem A]{howard-GZ} asserts that $e_\mathfrak{p}^\iota Q_{\mathfrak{p}}=0$ if and only if $L'(1,\chi^\iota,g_\mathfrak{p}^\iota)=0$.   
\end{proof}

\begin{Prop}\label{Prop:gz II}
Suppose $\mathfrak{p}\in\mathrm{Gen}_2(\theta)$ satisfies condition (a) in Definition \ref{Def:generic}. For any $\iota\in\mathrm{Emb}(\mathfrak{p})$ the $L$-function $L(s,\chi^\iota_\mathfrak{p},g^\iota_\mathfrak{p})$ vanishes to odd order at $s=1$ and 
$$
\mathfrak{Z}_\mathfrak{p}=0  \implies L'(1,\chi^\iota_\mathfrak{p},g^\iota_\mathfrak{p})=0 .
$$
\end{Prop}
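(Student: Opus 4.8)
The plan is to descend to the newform $f$ of level $M$ and invoke the classical Gross-Zagier formula over $K$. First I note that in case (a) the triviality of $\theta_\mathfrak{p}$ forces $\Theta_\mathfrak{p}$, and hence $\chi_\mathfrak{p}$, to be trivial; thus $L(s,\chi_\mathfrak{p}^\iota,g_\mathfrak{p}^\iota)$ is the base-change $L$-function $L(s,g_\mathfrak{p}^\iota/K)=L(s,g_\mathfrak{p}^\iota)\,L(s,g_\mathfrak{p}^\iota\otimes\epsilon_K)$, and $g_\mathfrak{p}=f_\alpha$ is the ordinary $p$-stabilization of a weight-two newform $f\in S_2(\Gamma_0(M),F_\mathfrak{p})$ with $U_p$-eigenvalue the unit root $\alpha$. (Triviality of $\theta_\mathfrak{p}$ also forces $t(\mathfrak{p})=1$, so we work on $X_1$.) The odd order of vanishing at $s=1$ follows from the functional equation and $\epsilon_K(M)=1$ exactly as in Proposition \ref{Prop:gz I}. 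The essential new feature is that $g_\mathfrak{p}$ is \emph{old} at $p$: the summand $e_\mathfrak{p}\bigl(\mathrm{Ta}_p(J_1)\otimes F_\mathfrak{p}\bigr)$ is now four-dimensional, and the ordinary projector $e^\mathrm{ord}$ cuts it to the two-dimensional $V_\mathfrak{p}$ with a two-dimensional kernel (the non-unit-root part). The injectivity that drove Proposition \ref{Prop:gz I} therefore fails.

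The core step is a comparison of $Q_\mathfrak{p}$ with the classical Heegner point on the newform quotient $A_f$. Using the two degeneracy maps $X_1\to X_0(M)$, the point $h_1$ maps to the conductor-$p$ point $y_p$ and the conductor-one point $y_1$ on $X_0(M)$. Feeding these into the standard norm relation between Heegner points of consecutive $p$-power conductor, and using $e^\mathrm{ord}U_p=\alpha$, I would express the ordinary class $Q_\mathfrak{p}$ as a \emph{nonzero} multiple of the Kummer image of $P_f=\mathrm{Tr}_{H/K}(y_1)\in A_f(K)$ (the full trace, since $\chi_\mathfrak{p}=1$); the comparison factor works out to $\alpha$ when $p$ is inert and to $\alpha-2$ when $p$ splits, both nonzero since $|\alpha|=\sqrt p$. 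The conclusion I extract is the one implication the main theorem needs,
$$
Q_\mathfrak{p}=0\ \Longrightarrow\ P_f=0 .
$$

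It then remains to run Gross-Zagier and compare Euler factors. The classical Gross-Zagier formula for $f$ over $K$ gives $\hat h(P_f)=c\cdot L'(1,f^\iota/K)$ with $c\neq 0$, so $P_f=0$ yields $L'(1,f^\iota/K)=0$. Since $g_\mathfrak{p}=f_\alpha$, the base-change $L$-functions are related by
$$
L(s,g_\mathfrak{p}^\iota/K)=L(s,f^\iota/K)\,(1-\beta p^{-s})(1-\beta\,\epsilon_K(p)\,p^{-s}),
$$
and both factors are nonzero at $s=1$ because $|\beta|=\sqrt p$; hence $L'(1,f^\iota/K)=0$ forces $L'(1,\chi_\mathfrak{p}^\iota,g_\mathfrak{p}^\iota)=L'(1,g_\mathfrak{p}^\iota/K)=0$. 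Combining with the equivalence $\mathfrak{Z}_\mathfrak{p}=0\Leftrightarrow Q_\mathfrak{p}=0$ of (\ref{simple class}) completes the chain.

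The hard part is the comparison of the second paragraph. Unlike the $p$-new situation of Proposition \ref{Prop:gz I}, where Eichler-Shimura identifies the Hecke summand with $V_\mathfrak{p}$ and \cite{howard-GZ} applies verbatim to $g_\mathfrak{p}$, here $g_\mathfrak{p}$ is old and $\chi_\mathfrak{p}$ is trivial, so that Rankin-Selberg theorem is not directly available (the theta series of the trivial character is Eisenstein, not cuspidal). One is therefore forced to descend to $f$ and carry the Heegner data through the degeneracy maps, the norm relations, and the ordinary projection by hand, keeping track of the comparison factor and of the $\mu_p$-contribution hidden in $\mathrm{Tr}_{L_1/K}$. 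This bookkeeping is the crux; because the ordinary projection discards the non-unit-root part of the level-$Mp$ Heegner class, the cleanest output is the single implication, which is precisely what the passage to the main theorem requires.
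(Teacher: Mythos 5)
Your strategy is the same as the paper's: pass to the level-$M$ newform $f$ via the two degeneracy maps $X_0(Mp)\to X_0(M)$, use the Perrin-Riou norm relation between the conductor-$p$ and conductor-$1$ Heegner points to eliminate one of the two old-form components, check the resulting scalar is nonzero using $|a_\mathfrak{p}(p)|=p^{1/2}$, and finish with classical Gross--Zagier plus the Euler-factor comparison (which also yields the odd order of vanishing). Two corrections to the bookkeeping you defer. First, the scalars you announce ($\alpha$ inert, $\alpha-2$ split) are not the ones that come out: applying $\beta_*$ to $(U_p-p/a_\mathfrak{p}(p))e_\mathfrak{p}P$ and substituting the norm relation $\alpha_*(P)=\frac{T_p-1-\epsilon_K(p)}{p-\epsilon_K(p)}\beta_*(P)$ together with $a_\mathfrak{p}(p)^2-b(p)a_\mathfrak{p}(p)+p=0$ produces the factor $(a_\mathfrak{p}(p)-1)(a_\mathfrak{p}(p)-\epsilon_K(p))$; this is still nonzero because $a_\mathfrak{p}(p)\neq\pm1$, so the conclusion survives, but your values suggest a conflation of the $T_p$- and $U_p$-eigenvalues. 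Second, you claim $Q_\mathfrak{p}$ \emph{equals} a nonzero multiple of the Kummer image of the level-$M$ Heegner point; were that established, it would give the full equivalence $\mathfrak{Z}_\mathfrak{p}=0\iff L'(1,\chi_\mathfrak{p}^\iota,g_\mathfrak{p}^\iota)=0$, which the paper explicitly states it is unable to prove. Your diagnosis of why only one implication survives is also off: the ordinary projection loses nothing ($\mathfrak{Z}_\mathfrak{p}=0\iff e^{\mathrm{ord}}e_\mathfrak{p}Q=0$ is an honest equivalence, since $e^{\mathrm{ord}}e_\mathfrak{p}(J_t(L_t)\otimes F_\mathfrak{p})\to H^1(L_t,V_\mathfrak{p})$ is injective); the implication becomes one-way only when one detects the class by pushing it down to level $M$ via $\beta_*$, which is not known to be injective on the relevant subspace. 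So the plan is the right one, but the comparison should be phrased as an identity obtained \emph{after} applying $\beta_*$, with the nonzero factor recomputed as above.
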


\begin{proof}
Our hypotheses imply that $t=1$, $\chi_\mathfrak{p}$ is trivial, and  there is a normalized newform 
$$
f=\sum_{n>0}  b(n) q^n \in S_2(\Gamma_0(M),F_\mathfrak{p})
$$ 
such that $g_\mathfrak{p}$ is the $p$-stabilization of $f$.  Define $f^\iota\in S_2(\Gamma_0(M),\Q^\alg)$ to be the modular form with Fourier coefficients  $b^\iota(n)=\iota(b(n))$.  The $L$-functions of $g^\iota_\mathfrak{p}$ and $f^\iota$ are related by
$$
\left( 1-\frac{p^{1-s}}{a^\iota_\mathfrak{p}(p) } \right) L(s,f^\iota)=L(s,g^\iota_\mathfrak{p})
$$
while the Rankin-Selberg $L$-functions are related by
$$
\left( 1-\frac{p^{1-s}}{a^\iota_\mathfrak{p}(p) } \right) \left( 1-\epsilon_K(p) \frac{p^{1-s}}{a^\iota_\mathfrak{p}(p) } \right) 
L(s,\chi_\mathfrak{p}^\iota, f^\iota)=L(s,\chi_\mathfrak{p}^\iota, g^\iota_\mathfrak{p}).
$$
As $a^\iota_\mathfrak{p}(p)$ is a root of $X^2-b^\iota(p)X+p$, and hence has complex absolute value $p^{1/2}$,  we have $a_\mathfrak{p}(p)\not=\pm 1$.  Therefore the Euler factors do not vanish at $s=1$ and the two Rankin-Selberg $L$-functions have the same order of vanishing, which must be odd as the sign of the functional equation of the $L$-function on the left is $-\epsilon_K(M)=-1$ by \cite[IV (0.2)]{gz} or \cite[(1)]{howard-GZ}.  The central derivative of the $L$-function on the left is determined by the original Gross-Zagier theorem, and we will relate our Heegner class $\mathfrak{Z}_\mathfrak{p}$ to the classical Heegner point considered by Gross-Zagier.

Consider the degeneracy maps
$$
\alpha,\beta: X_0(Mp)\rightarrow X_0(M)
$$
defined on moduli by 
$$
\alpha(E,C\times D)=(E,C)\hspace{1cm}
\beta(E,C\times D)=(E/D, (C\times D)/D)
$$
where $E$ is an elliptic curve (over some $\Q$-scheme) and $C$ and $D$ are cyclic subgroup schemes of $E$ of orders $M$ and $p$, respectively.  The Hecke algebra  $\mathbb{T}$ defined in the proof of Proposition \ref{Prop:gz I} acts on $J_t$, $J_0(Mp)$, and $J_0(M)$, and $\alpha$ and $\beta$ induce $\mathbb{T}$-equivariant maps
$$
\alpha^*,\beta^*: \mathrm{Ta}_p(J_0(M))\rightarrow  \mathrm{Ta}_p(J_0(Mp))
$$
which satisfy the relations
\begin{equation}\label{pullback relations}
U_p\circ\alpha^*=p\beta^*\hspace{1cm}
U_p\circ\beta^*=\beta^*\circ T_p-\alpha^*.
\end{equation}
Gently abusing notation, we abbreviate 
$$
e_\mathfrak{p}\mathrm{Ta}_p(J)=e_\mathfrak{p}(\mathrm{Ta}_p(J)\otimes_{\Z_p}F_\mathfrak{p})
$$
for $J$ any one of $J_t$, $J_0(M)$, or $J_0(Mp)$.  Whenever  $(n,Mp)=1$ we have $b(n)=a_\mathfrak{p}(n)$, thus $T_n$ acts as $b(n)$ on $e_\mathfrak{p} \mathrm{Ta}_p(J_0(M))$.  It follows from strong multiplicity one that also $T_p$ acts as $b(p)$ on $e_\mathfrak{p} \mathrm{Ta}_p(J_0(M))$.
From this and (\ref{pullback relations}) it follows that $U_p$ acts  semi-simply on
\begin{eqnarray}\nonumber
e_\mathfrak{p} \mathrm{Ta}_p(J_t) 
&\iso&
e_\mathfrak{p} \mathrm{Ta}_p(J_0(Mp))  \\ 
&\iso&
\alpha^*e_\mathfrak{p} \mathrm{Ta}_p(J_0(M)) 
\oplus
\beta^*e_\mathfrak{p} \mathrm{Ta}_p(J_0(M)) \label{pull-back decomp}
\end{eqnarray}
with characteristic polynomial 
$$
(X^2-b(p)X+p)^2=(X-a_\mathfrak{p}(p))^2 (X-p/a_\mathfrak{p}(p))^2,
$$
and that $e^\mathrm{ord}$ and $1-e^\mathrm{ord}$ act as projections onto the $a_\mathfrak{p}(a)$ and $p/a_\mathfrak{p}(p)$  eigenspaces, respectively.  Thus the map $\mathrm{Ta}_p^\mathrm{ord}(J_t)\otimes_{\co_F}F_\mathfrak{p}\rightarrow V_\mathfrak{p}$ takes the summand
\begin{equation*}
e_\mathfrak{p} (\mathrm{Ta}^\mathrm{ord}_p(J_t)\otimes_{\co_F}F_\mathfrak{p}) = e^\mathrm{ord} e_\mathfrak{p}\mathrm{Ta}_p(J_t)
\end{equation*}   
isomorphically to $V_\mathfrak{p}$, and it follows that the map
$$
e^\mathrm{ord} e_\mathfrak{p}( J_t(L_t)\otimes_\Z F_\mathfrak{p} )\rightarrow  H^1(L_t, V_\mathfrak{p})
$$
induced by (\ref{kummer}) is injective.  As this map takes $e^\mathrm{ord} e_\mathfrak{p} Q$ to $Q_\mathfrak{p}$,   we deduce from (\ref{simple class}) 
$$
 \mathfrak{Z}_\mathfrak{p} =0 \iff  e^\mathrm{ord} e_\mathfrak{p} Q=0 \iff \left( U_p-\frac{p}{a_\mathfrak{p}(p)} \right)  e_\mathfrak{p} Q=0.
$$

Let $x\in X_0(Mp)(H_p)$ be the image of $h_t$ (still $t=1$) under the degeneracy map $X_t\rightarrow X_0(Mp)$.  Thus $x$ corresponds to the cyclic $Mp$-isogeny given by the composition
$$
\C/\co_p\rightarrow \C/\co_K\rightarrow \C/\mathfrak{M}^{-1}
$$
Fix a rational cusp $\mathfrak{c}\in X_0(Mp)(\Q)$ and define
$$
P=\sum_{\sigma\in\mathrm{Gal}(H_p/K)} (x-\mathfrak{c})^\sigma\in J_0(Mp)\otimes_\Z F_\mathfrak{p}.
$$
As the first isomorphism of (\ref{pull-back decomp}) identifies the Kummer images of $[L_t:H_p] e_\mathfrak{p}P$ and $e_\mathfrak{p}Q$, we have
$$
\mathfrak{Z}_\mathfrak{p} =0 \iff \left( U_p-\frac{p}{a_\mathfrak{p}(p)} \right)  e_\mathfrak{p} P=0.
$$
The Albanese maps $\alpha_*,\beta_*:J_0(Mp)\rightarrow J_0(M)$  are related by $\beta_* U_p=p\alpha_*$, and applying $\beta_*$ to the right hand side of the last $\iff$ we find
\begin{equation}\label{almost there}
\mathfrak{Z}_\mathfrak{p} =0 \implies \left( \alpha_*-\frac{\beta_*}{a_\mathfrak{p}(p)} \right)  e_\mathfrak{p} P=0.
\end{equation}
The point $\beta(x)\in X_0(M)(H)$ is none other than the classical Heegner point considered by Gross-Zagier corresponding to the cyclic $M$-isogeny $\C/\co_K\rightarrow \C/\mathfrak{M}^{-1}$,  while $\alpha(x)\in X_0(M)(H_p)$ is the point corresponding to $\C/\co_p\rightarrow (\co_p\cap \mathfrak{M})^{-1}$.  The Euler system relations of  \cite[\S 3.1]{perrin-riou} imply that
$$
u\cdot \mathrm{Tr}_{H_p/H}\alpha(x)= 
\left\{\begin{array}{ll}
T_p \cdot \beta(x)  &  \mathrm{if\ }\epsilon_K(p)=-1 \\
(T_p -\sigma_p-\sigma_p^*  )  \cdot \beta(x) & \mathrm{if\ }\epsilon_K(p)=1
\end{array}\right.
$$
as divisors on $X_0(M)_{/H}$,  where $2u=|\co_K^\times|$ and $\sigma_p,\sigma^*_p\in \mathrm{Gal}(H/K)$ are the Frobenius automorphisms of the two primes above $p$ when $\epsilon_K(p)=1$.  From this and $u[H_p:H]=p-\epsilon_K(p)$ we deduce
$$
 \alpha_*(P)=\frac{T_p-1-\epsilon_K(p)}{p-\epsilon_K(p)} \cdot \beta_*(P).
$$
We may use this,  $a_\mathfrak{p}(p)^2-b(p)a_\mathfrak{p}(p)+p=0$, and $a_\mathfrak{p}(p)\not=\pm 1$ to deduce from  (\ref{almost there}) the implications
\begin{eqnarray*}
\mathfrak{Z}_\mathfrak{p}=0 &\implies& (a_\mathfrak{p}(p)-1)(a_\mathfrak{p}(p)-\epsilon_K(p)) \cdot \beta_*(e_\mathfrak{p}P) =0 \\
&\implies& \beta_*(e_\mathfrak{p}P) =0.
\end{eqnarray*}
Finally, according to the Gross-Zagier theorem \cite{gz} the element  
$$
e_\mathfrak{p}\beta_*(P)\in J_0(M)\otimes_\Z F_\mathfrak{p}
$$
is trivial if and only if $L'(1,\chi^\iota_\mathfrak{p},f^\iota)=0$.  This completes the proof of the proposition.
\end{proof}

 It seems likely that the implication $L'(1,\chi_\mathfrak{p}^\iota,g_\mathfrak{p}^\iota)=0\implies \mathfrak{Z}_\mathfrak{p}=0$  in Proposition \ref{Prop:gz II} holds as well, although we are unable to provide a proof.  There are two situations in which a weight $2$ arithmetic prime fails to be generic for $\theta$, and in neither case does it seem that one should expect $\mathfrak{Z}_\mathfrak{p}$ to be related to the derivative  $L'(1,\chi^\iota_\mathfrak{p},g_\mathfrak{p}^\iota)$. The first case is when  $g_\mathfrak{p}$ is newform of level $Mp$ and trivial nebentype and $\theta_\mathfrak{p}$ is trivial.  Letting  $\mathbf{1}_K$ denote the trivial character of $\A_K^\times$, the sign in the functional equation of 
 $$
 L(s, \chi_\mathfrak{p}^\iota,g_\mathfrak{p}^\iota)  = L(s,\mathbf{1}_K,g_\mathfrak{p}^\iota)
 $$ 
 is  $-\epsilon_K(p)$ and there are two sub-cases to consider.  If $p$ is inert in $K$ then $L(s, \mathbf{1}_K,g_\mathfrak{p}^\iota)$ vanishes to even order at $s=1$.  If  $p$ splits in $K$ then  the  Gross-Zagier theorem \cite{gz}   relates $L'(1,\mathbf{1}_K, g^\iota_\mathfrak{p})$ to the  point on $X_0(Mp)$ corresponding to the isogeny 
$$
\C/\co_K\rightarrow \C/(\mathfrak{MP})^{-1}
$$
where $\mathfrak{P}$ is a prime of $K$ above $p$.  In particular, both the source and target of the isogeny have complex multiplication by the maximal order $\co_K$.  On the other hand our cohomology class $\mathfrak{Z}_{\mathfrak{p}}$ is constructed from a point on $X(\Gamma_0(M)\cap\Gamma_1(p))$ whose image in $X_0(Mp)$ corresponds to the isogeny 
$$
\C/\co_p\rightarrow \C/\mathfrak{M}^{-1}
$$
in which the source and target have different CM orders.  Thus in this sub-case our Heegner class $\mathfrak{Z}_{\mathfrak{p}}$ is the wrong object to look at.
The other situation in which $\mathfrak{p}$ is not generic for $\theta$ is when  $\theta_\mathfrak{p}=\omega^{\frac{p-1}{2}}$ and $g$ is the $p$-stabilization of a newform $f\in S_2(\Gamma_0(M),F_\mathfrak{p})$.  We then have
$$
 L(s, \chi_\mathfrak{p}^\iota,g_\mathfrak{p}^\iota)
= L(s, \chi_\mathfrak{p}^\iota, f^\iota)
= L(s,\mathbf{1}_K ,f^\iota\otimes\omega^{\frac{p-1}{2}}).
$$
 The sign in the functional equation of $L(s, \mathbf{1}_K, f^\iota \otimes\omega^{\frac{p-1}{2}})$ is again  $-\epsilon_K(p)$, and exactly as in the case considered above one does not expect the central derivative of this $L$-function to be related to the class $\mathfrak{Z}_\mathfrak{p}$.

Suppose $\mathfrak{p}\subset R$ is arithmetic of weight two.  If 
\begin{equation}\label{order one silly}
\mathrm{ord}_{s=1}L(s,\chi^\iota_\mathfrak{p}, g^\iota_\mathfrak{p})=1
\end{equation}  
for every  $\iota\in\mathrm{Emb}(\mathfrak{p})$ then  we will say that $(g_\mathfrak{p},\chi_\mathfrak{p})$ has \emph{analytic rank one}.   If we assume that $\mathfrak{p}\in\mathrm{Gen}_2(\theta)$ then it suffices to verify (\ref{order one silly})  for a single $\iota\in\mathrm{Emb}(\mathfrak{p})$.  Indeed if $\mathfrak{p}$ satisfies condition (b) or (c) in Definition \ref{Def:generic} then this is clear from Proposition \ref{Prop:gz I}.  If $\mathfrak{p}$ satisfies condition (a) in Definition \ref{Def:generic} then, in the notation of the proof of Proposition \ref{Prop:gz II}, 
$$
\mathrm{ord}_{s=1} L(s,\chi_\mathfrak{p}^\iota,g_\mathfrak{p}^\iota)
=\mathrm{ord}_{s=1} L(s,\chi_\mathfrak{p}^\iota,f^\iota)
$$
where $f\in S_2(\Gamma_0(M), F_\mathfrak{p})$ and $\chi_\mathfrak{p}$ is the trivial character.  It follows from the Gross-Zagier theorem \cite{gz} that if the order of vanishing of the $L$-function on the right is $1$ for some $\iota$, then the order of vanishing is $1$ for all $\iota$.

\begin{Cor}\label{generic nonzero}
The following are equivalent:
\begin{enumerate}
\item 
there is a $\mathfrak{p}\in\mathrm{Gen}_2(\theta)$ such that $(g_\mathfrak{p},\chi_\mathfrak{p})$ has analytic rank one,
\item
there is an arithmetic prime $\mathfrak{p}$ such that $\mathfrak{Z}_\mathfrak{p}\not=0$,
\item
$\mathfrak{Z}$ is not $R$-torsion,
\item
$\mathfrak{Z}_\mathfrak{p}\not=0$ for all but finitely many arithmetic primes $\mathfrak{p}$,
\item 
$(g_\mathfrak{p},\chi_\mathfrak{p})$ has analytic rank one for all but finitely many $\mathfrak{p}\in\mathrm{Gen}_2(\theta)$.
\end{enumerate}
\end{Cor}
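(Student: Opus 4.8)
The plan is to prove the cyclic chain $(a)\Rightarrow(b)\Rightarrow(c)\Rightarrow(d)\Rightarrow(e)\Rightarrow(a)$. Two of these links are essentially formal. For $(e)\Rightarrow(a)$ I use only that $\mathrm{Gen}_2(\theta)$ is infinite, so that ``all but finitely many'' forces the existence of at least one such $\mathfrak{p}$. For $(a)\Rightarrow(b)$ I take the prime $\mathfrak{p}\in\mathrm{Gen}_2(\theta)$ supplied by $(a)$ and feed it into Proposition~\ref{Prop:gz I} or Proposition~\ref{Prop:gz II} according to which clause of Definition~\ref{Def:generic} it satisfies: since the relevant $L$-function vanishes to odd order, analytic rank one forces $L'(1,\chi_\mathfrak{p}^\iota,g_\mathfrak{p}^\iota)\neq 0$, and in every case this yields $\mathfrak{Z}_\mathfrak{p}\neq 0$, which is $(b)$.

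The implication $(b)\Rightarrow(c)$ is the heart of the matter, and I expect it to be the main obstacle. A soft argument is not enough: if $\mathfrak{Z}$ were killed by a nonzero $r\in R$, then $r$ acts as the scalar $r\bmod\mathfrak{p}$ on the $F_\mathfrak{p}$-vector space $H^1(K,V_\mathfrak{p}^\dagger)$, so $\mathfrak{Z}_\mathfrak{p}=0$ whenever $r\notin\mathfrak{p}$; but this only kills the specialization at \emph{all but finitely many} $\mathfrak{p}$, and a priori the single prime furnished by $(b)$ could lie in the finite exceptional set cut out by $r$. To close this gap I would show that the torsion submodule of $A=H^1(K,\mathbf{T}^\dagger)$ is pseudo-null over $R$, i.e. has no support in codimension one; equivalently, $A_\mathfrak{p}$ is $R_\mathfrak{p}$-torsion-free at every height-one prime $\mathfrak{p}$. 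Granting this, a torsion class $\mathfrak{Z}$ maps to zero in $A_\mathfrak{p}$ for every arithmetic $\mathfrak{p}$, whence $\mathfrak{Z}_\mathfrak{p}=0$ for all such $\mathfrak{p}$ \emph{without exception}, and $(b)$ immediately forces $\mathfrak{Z}$ to be non-torsion. The pseudo-nullity I would deduce from $H^0(K,\mathbf{T}^\dagger)=0$ together with the vanishing of $H^0(K,V_\mathfrak{q}^\dagger)$ at height-one primes $\mathfrak{q}$, both consequences of the absolute irreducibility of the residual representation; this is where the running irreducibility hypothesis is indispensable, and I would expect to borrow the precise statement from \cite{howard-hida}.

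For $(c)\Rightarrow(d)$ I argue by generic freeness. The torsion submodule $A_{\mathrm{tors}}$ is supported on a proper closed subset, hence on only finitely many height-one primes, and $R$ is regular in codimension one away from finitely many more; excluding these, $R_\mathfrak{p}$ is a discrete valuation ring with uniformizer $\pi$ and $A_\mathfrak{p}=(A/A_{\mathrm{tors}})_\mathfrak{p}$ is free. Since $\mathfrak{Z}$ is non-torsion, its image in the torsion-free quotient $A/A_{\mathrm{tors}}$ is nonzero, so there is an $R$-linear functional $\phi\colon A\to R$ with $r_0:=\phi(\mathfrak{Z})\neq 0$. For every arithmetic $\mathfrak{p}$ outside the finite bad set and outside the finitely many height-one primes containing $r_0$, the localized functional sends $\mathfrak{Z}$ to a unit of $R_\mathfrak{p}$, so $\mathfrak{Z}\notin\mathfrak{p}A_\mathfrak{p}=\pi A_\mathfrak{p}$. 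The sequence obtained by applying $G_K$-cohomology to multiplication by $\pi$ on $\mathbf{T}^\dagger_\mathfrak{p}$ identifies $\mathfrak{Z}_\mathfrak{p}$ with the image of $\mathfrak{Z}$ under $A_\mathfrak{p}/\pi A_\mathfrak{p}\hookrightarrow H^1(K,V_\mathfrak{p}^\dagger)$, so $\mathfrak{Z}_\mathfrak{p}\neq 0$ for all but finitely many arithmetic $\mathfrak{p}$, which is $(d)$.

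Finally $(d)\Rightarrow(e)$ combines this with genericity bookkeeping. All but finitely many $\mathfrak{p}\in\mathrm{Gen}_2(\theta)$ satisfy clause (c) of Definition~\ref{Def:generic}, while clauses (a) and (b) each force $g_\mathfrak{p}$ to have level $Mp$ (trivial wild character) and hence are met by only finitely many primes. Discarding these finitely many together with the finite exceptional set from $(d)$, every remaining $\mathfrak{p}\in\mathrm{Gen}_2(\theta)$ satisfies clause (b) or (c), so Proposition~\ref{Prop:gz I} applies: because the order of vanishing at $s=1$ is odd, $\mathfrak{Z}_\mathfrak{p}\neq 0$ is equivalent to $L'(1,\chi_\mathfrak{p}^\iota,g_\mathfrak{p}^\iota)\neq 0$, and this is equivalent to $(g_\mathfrak{p},\chi_\mathfrak{p})$ having analytic rank one. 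Thus $(d)$ yields analytic rank one for all but finitely many $\mathfrak{p}\in\mathrm{Gen}_2(\theta)$, completing the cycle.
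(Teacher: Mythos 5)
Your cyclic chain is the same as the paper's, and four of the five links match the paper's proof essentially verbatim: (a)$\Rightarrow$(b) and (d)$\Rightarrow$(e) via Propositions \ref{Prop:gz I} and \ref{Prop:gz II} with the same bookkeeping about clauses (a) and (b) of Definition \ref{Def:generic} occurring only finitely often, (e)$\Rightarrow$(a) trivially, and your functional-plus-generic-freeness argument for (c)$\Rightarrow$(d) is in substance a proof of \cite[Lemma 2.1.7]{howard-hida}, which is exactly what the paper cites there (the paper also passes to $\mathfrak{G}=\mathrm{Gal}(\mathcal{H}/H)$-cohomology to guarantee finite generation of $H^1$ over $R$ and to make the coboundary arguments with $\varpi_\mathfrak{p}$ legitimate --- a technicality you should not skip when working with $H^1(K,\mathbf{T}^\dagger_\mathfrak{p})$ directly). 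The one genuinely different link is (b)$\Rightarrow$(c), and you correctly identify it as the crux. The paper does \emph{not} prove torsion-freeness of $H^1(K,\mathbf{T}^\dagger)$ at arithmetic primes; instead it observes that $\mathfrak{Z}$ lies in $\mathrm{Sel}_\mathrm{Gr}(K,\mathbf{T}^\dagger)$ by \cite[Proposition 2.4.5]{howard-hida} and invokes Nekov\'a\v{r}'s theorem \cite[Proposition 12.7.13.4(iii)]{nek} that the localized Selmer group $\mathrm{Sel}_\mathrm{Gr}(K,\mathbf{T}^\dagger)_\mathfrak{p}$ is \emph{free} over the discrete valuation ring $R_\mathfrak{p}$, so a class with nonzero image there is automatically non-torsion. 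Your route --- torsion-freeness of the localized $H^1$ deduced from $H^0(K,V_\mathfrak{p}^\dagger)=0$ via the long exact sequence for multiplication by $\varpi_\mathfrak{p}$ --- is more elementary in that it avoids Selmer complexes, and it does suffice, since you only ever need the arithmetic primes (where $R_\mathfrak{p}$ is a DVR by \cite[\S 12.7.5]{nek}); the full pseudo-nullity claim at arbitrary height-one primes is both stronger than needed and problematic at the finitely many non-regular ones. The caveat is your closing sentence: $H^0(K,V_\mathfrak{p}^\dagger)=0$ is \emph{not} an immediate consequence of absolute irreducibility of the residual representation over $G_\Q$, because the restriction to the index-two subgroup $G_K$ can become reducible (the dihedral/CM case, which is genuinely possible here since $K$ is an auxiliary imaginary quadratic field); one must additionally rule out the constituent character being $\Theta_\mathfrak{p}|_{G_K}$, e.g.\ by comparing Hodge--Tate weights or ramification. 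So your argument is correct in outline but the input you propose to ``borrow'' needs this extra step, whereas the paper's appeal to the freeness of the localized Selmer group packages the difficulty into a single citation.
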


\begin{proof}
The implication (a)$\implies$(b) is immediate from Propositions \ref{Prop:gz I} and \ref{Prop:gz II}.   Assume (b) holds.  It follows from \cite[Proposition 2.4.5]{howard-hida} that $\mathfrak{Z}\in\mathrm{Sel}_\mathrm{Gr}(K,\mathbf{T}^\dagger)$, and (b) implies that  $\mathfrak{Z}$ has nontrivial image in the localization $\mathrm{Sel}_\mathrm{Gr}(K,\mathbf{T}^\dagger)_\mathfrak{p}$ for some arithmetic prime $\mathfrak{p}$.  According to \cite[Proposition 12.7.13.4(iii)]{nek} (and using \cite[(21)]{howard-hida} to compare the strict Greenberg Selmer group with Nekov\'a\v{r}'s extended Selmer group) the localization $\mathrm{Sel}_\mathrm{Gr}(K,\mathbf{T}^\dagger)_\mathfrak{p}$ is free of finite rank over $R_\mathfrak{p}$.  Thus $\mathfrak{Z}$ has non-torsion image after localizing at $\mathfrak{p}$, and so was non-torsion to begin with.  Thus  (b)$\implies$(c).   
Let $\mathcal{H}$ denote the maximal extension of $H$ unramified outside $Mp$ and set $\mathfrak{G}=\mathrm{Gal}(\mathcal{H}/H)$.  For any arithmetic prime $\mathfrak{p}$ we may pick a uniformizing parameter $\varpi_\mathfrak{p}$ of the discrete valuation ring (by \cite[\S 12.7.5]{nek})  $R_\mathfrak{p}$.  The $\mathfrak{G}$-cohomology of the exact sequence
$$
0\rightarrow \mathbf{T}_{\mathfrak{p}}^\dagger\map{\varpi_\mathfrak{p}}\mathbf{T}_{\mathfrak{p}}^\dagger\rightarrow V_\mathfrak{p}^\dagger\rightarrow 0
$$
shows that the natural map
$$
H^1(\mathfrak{G},\mathbf{T}^\dagger)_\mathfrak{p}/ \mathfrak{p} H^1(\mathfrak{G},\mathbf{T}^\dagger)_\mathfrak{p}\rightarrow H^1(\mathfrak{G},V^\dagger_\mathfrak{p})
$$
is injective.  As $H^1(\mathfrak{G},\mathbf{T}^\dagger)$ is finitely generated as an $R$-module and contains $\mathfrak{Z}$, the implication (c)$\implies$(d) follows from \cite[Lemma 2.1.7]{howard-hida}.  The implication (d)$\implies$(e) follows by another application of Proposition \ref{Prop:gz I}, and (e)$\implies$(a) is obvious.
\end{proof}


\section{Results over $\Q$}
\label{sec:3}


For any arithmetic prime $\mathfrak{p}$ let $\co_\mathfrak{p}$ denote ring of integers of $F_\mathfrak{p}$.  The Mazur-Tate-Teitelbaum  \cite{mtt} $p$-adic $L$-function 
$
\mathcal{L}(\cdot, g_\mathfrak{p})\in \co_{\mathfrak{p}}[[\Z_p^\times]]
$ 
of $g_\mathfrak{p}$, viewed as an $F_\mathfrak{p}$-valued function on characters  $\chi:\Z_p^\times\rightarrow F_\mathfrak{p}^\times$, satisfies the functional equation \cite[Proposition 2.3.6]{howard-hida}
$$
\mathcal{L}(\chi,g_\mathfrak{p})=-w\chi^{-1}(-M) \theta_\mathfrak{p}(-M) \cdot \mathcal{L}( [\cdot]_\mathfrak{p} \chi^{-1} , g_\mathfrak{p})
$$
where $w=\pm 1$ is independent of $\mathfrak{p}$ (but depends on $\theta$; see \cite[Remark 2.3.4]{howard-hida}).  Taking $\chi=\theta_\mathfrak{p}$ gives 
\begin{equation}\label{center point}
\mathcal{L}(\theta_\mathfrak{p},g_\mathfrak{p})=-w\mathcal{L}(\theta_\mathfrak{p},g_\mathfrak{p}).
\end{equation}

For each arithmetic prime $\mathfrak{p}\subset R$ of even weight $r$ the character $\theta_\mathfrak{p}$ has the form 
 $$
 \theta_\mathfrak{p}(z)=z^{\frac{r-2}{2}} \vartheta_\mathfrak{p}(z)
 $$
 with $\vartheta_\mathfrak{p}$ of finite order. Define a $p$-adic eigenform of trivial nebentype
$$
f_\mathfrak{p}=g_\mathfrak{p}\otimes\vartheta^{-1}_\mathfrak{p}\in S_r(\Gamma_0(Mp^{2t}),F_\mathfrak{p})
$$
where $t=t(\mathfrak{p})$ as in \S \ref{sec:1}.  For any $\iota\in \mathrm{Emb}(\mathfrak{p})$ define 
$$
f_\mathfrak{p}^\iota\in S_r(\Gamma_0(Mp^{2t}), \Q^\alg)
$$ 
to be the classical eigenform obtained by applying $\iota$ to the Fourier coefficients of $f_\mathfrak{p}$ (all of which are algebraic over $\Q$).  Let us say that  $\mathfrak{p}$  \emph{has a trivial zero} if $r=2$, $\theta_\mathfrak{p}$ is trivial, and $a_\mathfrak{p}(p)=1$. We write $\Omega(\theta)$ for the set of arithmetic primes $\mathfrak{p}\subset R$ of even weight  which do \emph{not} have a trivial zero, and note that  $\mathrm{Gen}_2(\theta)\subset  \Omega(\theta).$

\begin{Lem}
If $\mathfrak{p}\in\Omega(\theta)$ has weight $r$ and $\iota\in\mathrm{Emb}(\mathfrak{p})$ then
\begin{equation}\label{interpolation}
\mathcal{L}(\theta_\mathfrak{p},g_\mathfrak{p})  = 0
\iff   L\left(r/2,f_\mathfrak{p}^\iota\right)=0.
\end{equation}
Furthermore the functional equation of $L(s,f_\mathfrak{p}^\iota)$ has sign $-w$.
\end{Lem}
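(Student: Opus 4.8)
The proof rests on the interpolation property that defines the Mazur--Tate--Teitelbaum $p$-adic $L$-function $\mathcal{L}(\cdot,g_\mathfrak{p})$: it expresses the value at a character $\chi\colon\Z_p^\times\to F_\mathfrak{p}^\times$ as an explicit (Euler factor)$\times$(Gauss sum) multiple of the algebraic part of a critical value of the complex $L$-function of $g_\mathfrak{p}$ twisted by the finite-order part of $\chi$. The plan is to evaluate this formula at the single character $\theta_\mathfrak{p}$ to obtain the equivalence (\ref{interpolation}), and then to feed the $p$-adic functional equation through the same interpolation to read off the sign of the archimedean functional equation of $f_\mathfrak{p}^\iota$.

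For the equivalence I would first observe that $\theta_\mathfrak{p}(z)=z^{(r-2)/2}\vartheta_\mathfrak{p}(z)$ is the \emph{central} critical character: it is the fixed point of the reflection $\chi\mapsto[\cdot]_\mathfrak{p}\chi^{-1}$, its power part $z^{(r-2)/2}$ selects the point $s=r/2$, and its finite part $\vartheta_\mathfrak{p}$ twists $g_\mathfrak{p}$ into $f_\mathfrak{p}=g_\mathfrak{p}\otimes\vartheta_\mathfrak{p}^{-1}$. The interpolation formula then reads $\mathcal{L}(\theta_\mathfrak{p},g_\mathfrak{p})=\mathcal{E}\cdot L^{\mathrm{alg}}(r/2,f_\mathfrak{p})$, where $\mathcal{E}\in F_\mathfrak{p}$ collects the Gauss-sum and Euler factors and $L^{\mathrm{alg}}$ is the algebraic central value. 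Applying $\iota$ and the fixed embedding $\Q^\alg\hookrightarrow\C$, the image of $L^{\mathrm{alg}}(r/2,f_\mathfrak{p})$ vanishes if and only if $L(r/2,f_\mathfrak{p}^\iota)=0$, and this is independent of $\iota$ since a nonzero algebraic number has nonzero image under every embedding. It then remains to check $\mathcal{E}\neq 0$. If $\vartheta_\mathfrak{p}$ is nontrivial, $\mathcal{E}$ is a nonzero Gauss sum divided by a power of the unit $a_\mathfrak{p}(p)$. If $\vartheta_\mathfrak{p}$ is trivial, then $\theta_\mathfrak{p}=z^{(r-2)/2}$ is unramified and $\mathcal{E}$ is the $p$-Euler factor $(1-p^{(r-2)/2}/a_\mathfrak{p}(p))^2$, a $p$-adic unit whenever $r>2$ and vanishing for $r=2$ only when $a_\mathfrak{p}(p)=1$. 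Thus $\mathcal{E}=0$ precisely in the trivial-zero configuration $r=2$, $\theta_\mathfrak{p}$ trivial, $a_\mathfrak{p}(p)=1$, which $\mathfrak{p}\in\Omega(\theta)$ excludes; this gives (\ref{interpolation}).

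For the sign I would transport the $p$-adic functional equation to the archimedean side through the same interpolation. Under it, the reflection $\chi\mapsto[\cdot]_\mathfrak{p}\chi^{-1}=\theta_\mathfrak{p}^2\chi^{-1}$ corresponds to $s\mapsto r-s$ about the center $r/2$ (both being involutions fixing $\theta_\mathfrak{p}\leftrightarrow r/2$). Writing the interpolation formula at $\chi$ and at $\theta_\mathfrak{p}^2\chi^{-1}$ and dividing, the ratio of $p$-adic values equals $-w\,\chi^{-1}(-M)\theta_\mathfrak{p}(-M)$ by the cited functional equation, while the ratio of classical values equals the archimedean sign $\epsilon$ times the explicit $\Gamma$- and conductor-factor of the completed functional equation of $f_\mathfrak{p}^\iota$. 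The two interpolation factors $\mathcal{E}(\chi)$ and $\mathcal{E}(\theta_\mathfrak{p}^2\chi^{-1})$ contribute the complementary Gauss sums and $a_\mathfrak{p}(p)$-powers. Matching the two expressions, these factors together with $\chi^{-1}(-M)\theta_\mathfrak{p}(-M)$ cancel the archimedean factor, leaving $\epsilon=-w$.

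The main obstacle is the bookkeeping in this last step: one must verify that the local root numbers, Gauss sums, $a_\mathfrak{p}(p)$-Euler factors, and the period ratio $\Omega^+/\Omega^-$ (which switches under the reflection of the twisting character) carried by the two interpolation factors combine with $\chi^{-1}(-M)\theta_\mathfrak{p}(-M)$ to reproduce exactly the archimedean $\epsilon$-factor of $f_\mathfrak{p}^\iota$, with no residual sign. This is the standard compatibility of the $p$-adic and complex functional equations, extractable from the explicit interpolation formulas underlying \cite[Proposition 2.3.6]{howard-hida} and \cite{mtt}; the delicate point throughout is the weight-two case, where the central Euler factor degenerates and the hypothesis $\mathfrak{p}\in\Omega(\theta)$ is precisely what keeps $\mathcal{E}$ from vanishing.
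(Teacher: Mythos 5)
Your treatment of the equivalence (\ref{interpolation}) is essentially the paper's: one evaluates the Mazur--Tate--Teitelbaum interpolation formula at the central critical character $\theta_\mathfrak{p}$ and observes that the $p$-adic multiplier $\mathcal{E}$ vanishes only in the configuration $r=2$, $\theta_\mathfrak{p}$ trivial, $a_\mathfrak{p}(p)=1$, which is exactly the ``trivial zero'' excluded by $\mathfrak{p}\in\Omega(\theta)$. That half is correct.

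The sign assertion is where there is a genuine gap. You propose to read the archimedean sign $\epsilon$ off the $p$-adic functional equation by writing the interpolation formula at $\chi$ and at $[\cdot]_\mathfrak{p}\chi^{-1}$ and \emph{dividing}; but both sides of those identities may vanish, and an identity $A=\epsilon B$ with $A=B=0$ determines nothing. To extract $\epsilon$ this way you need a character $\chi$ in the relevant range at which the interpolated $L$-value is nonzero, and in weight two every critical point of every twist is \emph{central}, so this requires a nonvanishing theorem for twisted central values (Rohrlich's theorem, say) that you never invoke. Beyond that, the assertion that the Gauss sums, $a_\mathfrak{p}(p)$-factors, period ratio $\Omega^+/\Omega^-$, and $\chi^{-1}(-M)\theta_\mathfrak{p}(-M)$ ``cancel the archimedean factor, leaving $\epsilon=-w$'' is the entire content of the claim, and you defer it as bookkeeping; note also that $f_\mathfrak{p}=g_\mathfrak{p}\otimes\vartheta_\mathfrak{p}^{-1}$ is generally not new of level $Mp^{2t}$, so even identifying ``the sign of the functional equation of $L(s,f_\mathfrak{p}^\iota)$'' with a root number requires passing to the underlying newform. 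The paper avoids all of this with a two-step argument you should adopt: by Nekov\'a\v{r} (\cite[Proposition 12.7.14.4(i)]{nek}) the sign of the functional equation of $L(s,f_\mathfrak{p}^\iota)$ is \emph{constant} as $\mathfrak{p}$ ranges over $\Omega(\theta)$, so it suffices to compute it at a single convenient $\mathfrak{p}$ with $g_\mathfrak{p}$ of trivial nebentype, where \cite[\S I.18]{mtt} directly identifies it as $-w$.
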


\begin{proof}
The equivalence (\ref{interpolation}) follows from the interpolation formula of Mazur-Tate-Teitelbaum \cite[\S I.14]{mtt}.  The assumption that $\mathfrak{p}$ does not have a trivial zero ensures that the $p$-adic multiplier appearing in the interpolation formula is nonzero, so that there is no extra zero in the sense of \cite[\S I.15]{mtt}. According to \cite[Proposition 12.7.14.4(i)]{nek} the sign in the functional equation of $L(s,f_\mathfrak{p}^\iota)$ is independent of $\mathfrak{p}\in\Omega(\theta)$, and according to \cite[\S I.18]{mtt} the sign is equal to $-w$ for any $\mathfrak{p}\in\Omega(\theta)$ such that $g_\mathfrak{p}$ has trivial  nebentype.
\end{proof}

Given  $\mathfrak{p}\in\Omega(\theta)$ of weight $r$ we say that $f_\mathfrak{p}$ has \emph{analytic rank zero} if $L(r/2,f_\mathfrak{p}^\iota)\not=0$.  It follows from (\ref{interpolation}) that this condition is independent of the choice of $\iota$.   The following result is an easy consequence of work of Kato,  Kitagawa, and Mazur; we state it for the purpose of comparison with Theorem \ref{central derivatives} below.

\begin{Thm}\label{rank zero}
The following are equivalent:
\begin{enumerate}
\item
there is a $\mathfrak{p}\in\Omega(\theta)$ such that $f_\mathfrak{p}$ analytic rank zero,
\item
$f_\mathfrak{p}$ has analytic rank zero for all but finitely many $\mathfrak{p}\in\Omega(\theta)$.
\end{enumerate}
Furthermore, when these conditions hold $\mathrm{Sel}_\mathrm{Gr}(\Q,\mathbf{T}^\dagger)$ is a rank zero $R$-module.
\end{Thm}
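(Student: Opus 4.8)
The plan is to package the central values $\mathcal{L}(\theta_\mathfrak{p},g_\mathfrak{p})$, for varying $\mathfrak{p}$, as the specializations of a single element of the domain $R$, and then to read off the dichotomy from the fact that a nonzero element of $R$ vanishes at only finitely many arithmetic primes. First I would recall the Mazur-Kitagawa two-variable $p$-adic $L$-function associated to the branch $R$: it is an $R$-valued measure $\mathcal{L}^{MK}$ on $\Z_p^\times$ (an element of the completed group algebra $R[[\Z_p^\times]]$) whose image in $\co_\mathfrak{p}[[\Z_p^\times]]$ under each arithmetic prime $\mathfrak{p}$ recovers, up to a nonzero $p$-adic multiplier, the one-variable Mazur-Tate-Teitelbaum $L$-function $\mathcal{L}(\cdot,g_\mathfrak{p})$ of \cite{mtt}. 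Since the critical character $\theta\colon\Z_p^\times\rightarrow R^\times$ is an $R^\times$-valued character of $\Z_p^\times$, integrating it against $\mathcal{L}^{MK}$ produces a distinguished element
$$
\mathcal{L}_\theta=\int_{\Z_p^\times}\theta\,d\mathcal{L}^{MK}\in R.
$$
Because $\theta_\mathfrak{p}$ is by definition the composite $\Z_p^\times\map{\theta}R^\times\rightarrow F_\mathfrak{p}^\times$, the interpolation property identifies the image of $\mathcal{L}_\theta$ in $F_\mathfrak{p}$ with $\mathcal{L}(\theta_\mathfrak{p},g_\mathfrak{p})$ up to a nonzero factor; here the restriction to $\mathfrak{p}\in\Omega(\theta)$ is exactly what guarantees the multiplier is nonzero, precisely as in the trivial-zero analysis of the preceding Lemma.

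Now comes the formal heart of the equivalence. The ring $R$ is a Noetherian integral domain, and the arithmetic primes are height-one primes, each $R_\mathfrak{p}$ being a discrete valuation ring \cite[\S 12.7.5]{nek}. Suppose (a) holds, so that $\mathcal{L}(\theta_\mathfrak{p},g_\mathfrak{p})\not=0$ for some $\mathfrak{p}\in\Omega(\theta)$; then $\mathcal{L}_\theta$ has nonzero image in $F_\mathfrak{p}$, and hence $\mathcal{L}_\theta\not=0$ in $R$. By \cite[Lemma 2.1.7]{howard-hida} a nonzero element of $R$ has nonzero image in $F_\mathfrak{p}$ for all but finitely many arithmetic primes $\mathfrak{p}$, whence $\mathcal{L}(\theta_\mathfrak{p},g_\mathfrak{p})\not=0$, and therefore $L(r/2,f_\mathfrak{p}^\iota)\not=0$ by the Lemma, for all but finitely many $\mathfrak{p}\in\Omega(\theta)$. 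This is exactly (b). The reverse implication (b)$\implies$(a) is immediate, since $\Omega(\theta)$ is infinite.

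It remains to prove the rank statement, for which I would invoke Kato's Euler system. Granting the equivalent conditions, $L(r/2,f_\mathfrak{p}^\iota)\not=0$ for all but finitely many $\mathfrak{p}\in\Omega(\theta)$, and for each such $\mathfrak{p}$ Kato's theorem gives the vanishing $\mathrm{Sel}_\mathrm{Gr}(\Q,V_\mathfrak{p}^\dagger)=0$ of the Selmer group of $V_\mathfrak{p}^\dagger$. The control/descent formalism of \cite{nek} — compared with the strict Greenberg condition via \cite[(21)]{howard-hida} — provides, for all but finitely many $\mathfrak{p}$, an injection $\mathrm{Sel}_\mathrm{Gr}(\Q,\mathbf{T}^\dagger)\otimes_R F_\mathfrak{p}\hookrightarrow\mathrm{Sel}_\mathrm{Gr}(\Q,V_\mathfrak{p}^\dagger)$. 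Were $\mathrm{Sel}_\mathrm{Gr}(\Q,\mathbf{T}^\dagger)$ of positive $R$-rank, its specialization would be nonzero at almost every height-one prime, contradicting the vanishing of the target; hence the module has rank zero. Conceptually this is the one-divisibility in the two-variable main conjecture: Kato's zeta elements, interpolated over the family, form an Euler system for $\mathbf{T}^\dagger$ whose explicit reciprocity law ties its bottom class to $\mathcal{L}_\theta$, so that $\mathcal{L}_\theta\not=0$ directly forces $\mathrm{Sel}_\mathrm{Gr}(\Q,\mathbf{T}^\dagger)$ to be $R$-torsion.

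I expect the genuine difficulty to be concentrated entirely in this last step. The equivalence (a)$\iff$(b) is formal once the Mazur-Kitagawa function and its interpolation are available, but transporting Kato's bound from a single specialization to the $R$-module $\mathrm{Sel}_\mathrm{Gr}(\Q,\mathbf{T}^\dagger)$ — verifying that Kato's hypotheses hold for all but finitely many $\mathfrak{p}$, and that the control map between $\mathrm{Sel}_\mathrm{Gr}(\Q,\mathbf{T}^\dagger)\otimes_R F_\mathfrak{p}$ and $\mathrm{Sel}_\mathrm{Gr}(\Q,V_\mathfrak{p}^\dagger)$ has controllable kernel — is where the real work and the reliance on external machinery lie.
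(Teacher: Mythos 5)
Your proposal is correct and follows essentially the same route as the paper: specialize the Mazur--Kitagawa two-variable $p$-adic $L$-function at $\theta$ to get an element of $R$, apply \cite[Lemma 2.1.7]{howard-hida} to pass from one nonvanishing specialization to almost all, translate via the interpolation Lemma, and then deduce the rank statement from Kato's vanishing of $H^1_f(\Q,V_\mathfrak{p}^\dagger)$ together with the injectivity of $\mathrm{Sel}_\mathrm{Gr}(\Q,\mathbf{T}^\dagger)\otimes_R F_\mathfrak{p}\rightarrow H^1_f(\Q,V_\mathfrak{p}^\dagger)$ from \cite[Proposition 12.7.13.4]{nek}. The only cosmetic difference is that you locate the nonzero $p$-adic multiplier in the two-variable interpolation rather than in the Mazur--Tate--Teitelbaum formula where the trivial-zero issue actually lives, but this does not affect the argument.
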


\begin{proof}
We give a quick sketch of the proof.  In \cite{epw} one finds the construction of a two-variable $p$-adic $L$-function (originally constructed by  Kitagawa \cite{kitagawa} following unpublished work of Mazur) $\mathcal{L}\in R[[\Z^\times_p]]$ whose image $\mathcal{L}(\theta,\cdot)$  under the map $R[[\Z_p^\times]]\rightarrow R$ induced by $\theta$ has the property that $R\rightarrow F_\mathfrak{p}$ takes $\mathcal{L}(\theta,\cdot )$ to $\mathcal{L}(\theta_\mathfrak{p},g_\mathfrak{p})$ for any arithmetic prime $\mathfrak{p}$.  If   $\mathcal{L}(\theta_\mathfrak{p},g_\mathfrak{p})\not=0$ for some arithmetic prime $\mathfrak{p}$ then $\mathcal{L}(\theta,\cdot )\not=0$, and hence by \cite[Lemma 2.1.7]{howard-hida} $\mathcal{L}(\theta_\mathfrak{p},g_\mathfrak{p})\not=0$ for \emph{all but finitely many} arithmetic primes $\mathfrak{p}$.  Thus the equivalence of (a) and (b) follows from  (\ref{interpolation}).  

Choosing a $\mathfrak{p}\in\Omega(\theta)$ such that $\mathcal{L}(\theta_\mathfrak{p},g_\mathfrak{p})\not=0$, the work of Kato \cite{kato} shows that the Bloch-Kato Selmer group $H^1_f(\Q,V_\mathfrak{p}^\dagger)$ is trivial  (alternatively, one could choose $\mathfrak{p}$ to have weight $2$ and then deduce the triviality of the Bloch-Kato Selmer group using the results of Gross-Zagier \cite{gz} and the methods of Kolyvagin, as  extended by  Nekov\'a\v{r} \cite{nek-euler} to include modular forms with coefficients in number fields).  It follows from \cite[Proposition 12.7.13.4(1)]{nek} that the natural map
$$
\mathrm{Sel}_\mathrm{Gr}(\Q,\mathbf{T}^\dagger)\otimes_R F_\mathfrak{p}\rightarrow  H^1_f(\Q,V_\mathfrak{p}^\dagger)
$$
is injective, and therefore the rank of $\mathrm{Sel}_\mathrm{Gr}(\Q,\mathbf{T}^\dagger)$ is zero.
\end{proof}

If $w=-1$ then it is a conjecture of Greenberg \cite{greenberg_deformation} that  condition (b) always holds in Theorem \ref{rank zero}, and condition (a) gives an effective method of verifying this conjecture for a given Hida family.  Of course if $w=1$ then $\mathcal{L}(\theta_\mathfrak{p},g_\mathfrak{p})$ vanishes for every arithmetic prime $\mathfrak{p}$ by (\ref{center point}) and Theorem \ref{rank zero} is vacuous; one should instead look at the behavior of $L'(r/2,f_\mathfrak{p}^\iota)$ as $\mathfrak{p}\in\Omega(\theta)$ varies.  As it is not known that the order of vanishing of $L(s,f_\mathfrak{p}^\iota)$ at $s=r/2$ is equal to the order of vanishing of $\mathcal{L}(\chi, g_\mathfrak{p})$ at $\chi=\theta_\mathfrak{p}$, studying the two-variable $p$-adic $L$-function $\mathcal{L}$ yields little information about $L'(r/2,f_\mathfrak{p})$.  We instead use the big Heegner point as a substitute for the element $\mathcal{L}(\theta,\cdot)$ in order to obtain information about the generic behavior of the central derivatives, and substitute the Gross-Zagier type results of \S \ref{sec:2} for the interpolation formula (\ref{interpolation}). As noted earlier, the cost is that we lose all information about forms of weight greater than two.

We now assume that $w=1$.  Fix a $\mathfrak{p}\in \mathrm{Gen}_2(\theta)$ and $\iota\in\mathrm{Emb}(\mathfrak{p})$.  The sign in the functional equation of $L(s,f^\iota_\mathfrak{p})$ is equal to $-1$, and we further assume that $L'(1, f^\iota_\mathfrak{p})\not=0$. By a result of Bump-Friedberg-Hoffstein \cite{bfh} one may choose a quadratic imaginary field $K$ of discriminant prime to $Mp$ in which all prime divisors of $M$ are split, and such that 
$L(1,f_\mathfrak{p}^\iota\otimes\epsilon_K)\not=0.$  
We now apply the results of \S \ref{sec:2}.  Using the factorization of $L$-functions
\begin{equation}\label{factorization}
L(s,\chi_\mathfrak{p}^\iota, g_\mathfrak{p}^\iota) = L(s,f_\mathfrak{p}^\iota) L(s,f_\mathfrak{p}^\iota\otimes\epsilon_K)
\end{equation}
we find that $(g_\mathfrak{p},\chi_\mathfrak{p})$ has analytic rank one, and it follows that $L'(1,f_\mathfrak{p}^\iota)\not=0$ for \emph{every} choice of $\iota$. In this situation we say that $f_\mathfrak{p}$ has \emph{analytic rank one}.
We now come to our main result.

\begin{Thm}\label{central derivatives}
Suppose that $w=1$.  The following are equivalent:
\begin{enumerate}
\item
 there exists a $\mathfrak{p}\in\mathrm{Gen}_2(\theta)$ such that $f_\mathfrak{p}$ has analytic rank one;
 \item
 $f_\mathfrak{p}$ has analytic rank one for all but finitely many $\mathfrak{p}\in\mathrm{Gen}_2(\theta)$.
 \end{enumerate}
 Furthermore, when these  conditions are satisfied  $\mathrm{Sel}_\mathrm{Gr}(\Q,\mathbf{T}^\dagger)$ is a rank one $R$-module.
\end{Thm}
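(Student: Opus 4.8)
The plan is to deduce everything from Corollary \ref{generic nonzero}, the factorization (\ref{factorization}), and a parity bookkeeping for its two factors. The implication (b)$\implies$(a) requires nothing, since $\mathrm{Gen}_2(\theta)$ is infinite (all but finitely many weight two arithmetic primes satisfy condition (c) of Definition \ref{Def:generic}). For (a)$\implies$(b), I would start from a prime $\mathfrak{p}_0\in\mathrm{Gen}_2(\theta)$ with $f_{\mathfrak{p}_0}$ of analytic rank one, so that $L'(1,f_{\mathfrak{p}_0}^\iota)\ne 0$ while $L(1,f_{\mathfrak{p}_0}^\iota)=0$ (the sign being $-w=-1$). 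By Bump--Friedberg--Hoffstein \cite{bfh} one chooses the field $K$ of \S\ref{sec:2} so that, in addition to the running hypotheses, $L(1,f_{\mathfrak{p}_0}^\iota\otimes\epsilon_K)\ne 0$. The factorization (\ref{factorization}) then shows $L(s,\chi_{\mathfrak{p}_0}^\iota,g_{\mathfrak{p}_0}^\iota)$ vanishes to order exactly one at $s=1$, so $(g_{\mathfrak{p}_0},\chi_{\mathfrak{p}_0})$ has analytic rank one. This is condition (a) of Corollary \ref{generic nonzero}, whence condition (e): $(g_\mathfrak{p},\chi_\mathfrak{p})$ has analytic rank one for all but finitely many $\mathfrak{p}\in\mathrm{Gen}_2(\theta)$.

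It remains to convert ``$(g_\mathfrak{p},\chi_\mathfrak{p})$ has analytic rank one'' back into ``$f_\mathfrak{p}$ has analytic rank one'' for each such $\mathfrak{p}$, which I would do again via (\ref{factorization}): the order of vanishing of $L(s,\chi_\mathfrak{p}^\iota,g_\mathfrak{p}^\iota)$ at $s=1$ is the sum of the orders of $L(s,f_\mathfrak{p}^\iota)$ and $L(s,f_\mathfrak{p}^\iota\otimes\epsilon_K)$. The first factor has sign $-w=-1$, hence odd order and therefore order at least one; the Rankin--Selberg sign is $-\epsilon_K(M)=-1$, so by multiplicativity of signs the sign of $L(s,f_\mathfrak{p}^\iota\otimes\epsilon_K)$ is $+1$, forcing even order. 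A sum equal to one of an odd number $\ge 1$ and an even number $\ge 0$ is possible only when the two orders are $1$ and $0$; thus $L'(1,f_\mathfrak{p}^\iota)\ne 0$ and $f_\mathfrak{p}$ has analytic rank one. This completes (a)$\implies$(b).

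For the final assertion I would bound the $R$-rank of $\mathrm{Sel}_\mathrm{Gr}(\Q,\mathbf{T}^\dagger)$ from above and below. For the upper bound, fix a weight two $\mathfrak{p}\in\mathrm{Gen}_2(\theta)$ with $(g_\mathfrak{p},\chi_\mathfrak{p})$ of analytic rank one (such $\mathfrak{p}$ exist by the equivalence just proved). Then $L(s,\chi_\mathfrak{p}^\iota,g_\mathfrak{p}^\iota)$ has a simple zero at the center, so by Gross--Zagier \cite{gz} and Kolyvagin's method in the form of Nekov\'a\v{r} \cite{nek-euler} the group $H^1_f(K,V_\mathfrak{p}^\dagger)$ is one-dimensional over $F_\mathfrak{p}$; since the restriction $H^1_f(\Q,V_\mathfrak{p}^\dagger)\to H^1_f(K,V_\mathfrak{p}^\dagger)$ is injective (its kernel is an $F_\mathfrak{p}$-space killed by $[K:\Q]=2$), we get $\dim_{F_\mathfrak{p}}H^1_f(\Q,V_\mathfrak{p}^\dagger)\le 1$. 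Exactly as in Theorem \ref{rank zero}, \cite[Proposition 12.7.13.4(1)]{nek} furnishes an injection $\mathrm{Sel}_\mathrm{Gr}(\Q,\mathbf{T}^\dagger)\otimes_R F_\mathfrak{p}\hookrightarrow H^1_f(\Q,V_\mathfrak{p}^\dagger)$, so the fiber dimension is at most one, and since the $R$-rank of a finitely generated module is bounded by any of its fiber dimensions, the rank is at most one.

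For the lower bound I would use the big Heegner point. By Corollary \ref{generic nonzero} the hypotheses force $\mathfrak{Z}$ to be non-$R$-torsion, and by \cite[Proposition 2.4.5]{howard-hida} it lies in $\mathrm{Sel}_\mathrm{Gr}(K,\mathbf{T}^\dagger)$. As $2$ is invertible in $R$, the action of $\mathrm{Gal}(K/\Q)$ decomposes this group into its $\pm$ eigenspaces, which I would identify with $\mathrm{Sel}_\mathrm{Gr}(\Q,\mathbf{T}^\dagger)$ and $\mathrm{Sel}_\mathrm{Gr}(\Q,\mathbf{T}^\dagger\otimes\epsilon_K)$. Since $K$ was chosen so that $f_{\mathfrak{p}_0}\otimes\epsilon_K$ has analytic rank zero, Kato \cite{kato} together with \cite[Proposition 12.7.13.4(1)]{nek} (i.e. the argument of Theorem \ref{rank zero} applied to the quadratic twist) shows $\mathrm{Sel}_\mathrm{Gr}(\Q,\mathbf{T}^\dagger\otimes\epsilon_K)$ has rank zero; hence the non-torsion class $\mathfrak{Z}$ has non-torsion projection to the first summand, giving $\mathrm{rank}_R\,\mathrm{Sel}_\mathrm{Gr}(\Q,\mathbf{T}^\dagger)\ge 1$. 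Together with the upper bound this yields rank exactly one. I expect this last step to be the main obstacle: one must verify that the $\mathrm{Gal}(K/\Q)$-decomposition is compatible with the Greenberg local conditions (using that $\epsilon_K$ is unramified at $p$, so the ordinary filtration is preserved) and that the rank-zero input genuinely transports to $\mathbf{T}^\dagger\otimes\epsilon_K$. An alternative is to invoke directly the formula of \cite{howard-hida} for the action of complex conjugation on $\mathfrak{Z}$, which for $w=1$ places $\mathfrak{Z}$ in the $+$ eigenspace modulo torsion; either way the essential content is pinning down which eigenspace carries the non-torsion part of the big Heegner point.
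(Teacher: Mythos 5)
Your treatment of the equivalence (a)$\iff$(b) is correct and is essentially the paper's own argument: choose $K$ by Bump--Friedberg--Hoffstein \cite{bfh} so that $L(1,f_{\mathfrak{p}_0}^\iota\otimes\epsilon_K)\neq 0$, pass through the factorization (\ref{factorization}) and Corollary \ref{generic nonzero}, and convert back to $f_\mathfrak{p}$ using the odd order of vanishing of $L(s,f_\mathfrak{p}^\iota)$; your explicit sign bookkeeping for the factor $L(s,f_\mathfrak{p}^\iota\otimes\epsilon_K)$ is a harmless elaboration of what the paper leaves implicit (odd order $\ge 1$ plus total order $1$ already forces the split $1+0$).

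The final assertion is where you diverge, and your upper bound has a genuine gap. You claim that for a weight-two $\mathfrak{p}$ with $(g_\mathfrak{p},\chi_\mathfrak{p})$ of analytic rank one, Gross--Zagier \cite{gz} and Kolyvagin's method in the form of \cite{nek-euler} give $\dim_{F_\mathfrak{p}}H^1_f(K,V_\mathfrak{p}^\dagger)=1$. But for all but finitely many $\mathfrak{p}\in\mathrm{Gen}_2(\theta)$ the form $g_\mathfrak{p}$ has nontrivial nebentype; the relevant self-dual representation is the twist $V_\mathfrak{p}^\dagger$ and the relevant point is the $\chi_\mathfrak{p}$-twisted combination $Q$. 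The classical Gross--Zagier--Kolyvagin machinery does not apply there, \cite{nek-euler} treats forms with trivial central character, and the paper's reference \cite{howard-GZ} supplies only the derivative formula (Theorem A), not an Euler-system bound on the Selmer group. Nor can you retreat to a prime of type (a) in Definition \ref{Def:generic}, since no such prime need exist on the branch $R$. The paper sidesteps all of this: once Corollary \ref{generic nonzero} shows that $\mathfrak{Z}$ is not $R$-torsion, it cites \cite[Corollary 3.4.3]{howard-hida} --- the $\Lambda$-adic Euler-system/Kolyvagin-system argument built from the big Heegner classes of all conductors, carried out over $R$ itself --- together with \cite[(21)]{howard-hida} to get that $\mathrm{Sel}_\mathrm{Gr}(\Q,\mathbf{T}^\dagger)$ has rank one. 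Your lower-bound argument (decomposing $\mathrm{Sel}_\mathrm{Gr}(K,\mathbf{T}^\dagger)$ into $\mathrm{Gal}(K/\Q)$-eigenspaces and killing the $\epsilon_K$-twisted one via Kato \cite{kato} and \cite[Proposition 12.7.13.4(1)]{nek}) is plausible modulo the compatibilities you flag, but it cannot rescue the missing upper bound, and it becomes superfluous once the correct reference is invoked.
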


\begin{proof}
Suppose $\mathfrak{p}\in\mathrm{Gen}_2(\theta)$ is such that $f_\mathfrak{p}$ has analytic rank one.  The argument above shows that we may choose the quadratic imaginary field $K$ of \S \ref{sec:2} in such a way that $\mathfrak{Z}_{\mathfrak{p}}\not=0$.  By Corollary \ref{generic nonzero} we see that $(g_\mathfrak{p},\chi_\mathfrak{p})$ has analytic rank one for all but finitely many $\mathfrak{p}\in \mathrm{Gen}_2(\theta)$.  The factorization (\ref{factorization}), together with the odd order of vanishing of $L(s,f_\mathfrak{p}^\iota)$ shows that $f_\mathfrak{p}$ has analytic rank one whenever $(g_\mathfrak{p},\chi_\mathfrak{p})$ has analytic rank one, and hence (a)$\implies$(b).  The other implication is obvious, and the final claim follows from Corollary \ref{generic nonzero}, \cite[Corollary 3.4.3]{howard-hida}, and \cite[(21)]{howard-hida}.
\end{proof}

Once again it is a conjecture of Greenberg \cite{greenberg_deformation} that condition (b) holds in Theorem \ref{central derivatives}, and condition (a) gives an effective method for verifying this for a given Hida family.  We point out that the implication
$$
(a)\implies \mathrm{rank}_R \ \mathrm{Sel}_\mathrm{Gr}(\Q,\mathbf{T}^\dagger)=1
$$
 in Theorem \ref{central derivatives} can also be deduced using Nekov\'a\v{r}'s parity results \cite[\S 12]{nek} (although this implication is never stated explicitly there), and so it is only the implication (a)$\implies$(b) which is new.

\bibliographystyle{plain}

\end{document}